\title{Theoretical and Computational Approaches to Determining Sets of Orders for $(k,g)$-Graphs}
\newtheorem{thm}{Theorem}
\newtheorem{tech}{Technique}
\newtheorem{dfn}[thm]{Definition}
\theoremstyle{definition}
\date{}
\author{Leonard Chidiebere Eze\authornote{1,3}
\and
Robert Jajcay\authornote{1,3}
\and
Tatiana Jajcayov\'a\authornote{2,3}
\and 
Dominika Z\' avack\' a\authornote{2,3}
}
\email{leonard.eze@fmph.uniba.sk}, \email{robert.jajcay@fmph.uniba.sk}).}
\begin{document}

\maketitle
\begin{abstract}
The Cage Problem requires for a given pair $k \geq 3, g \geq 3$ of integers the determination of the order of a smallest $k$-regular graph of girth $g$. We address a more general version of this problem and look for the $(k,g)$-spectrum of orders of $(k,g)$-graphs: the (infinite) list of all orders of $(k,g)$-graphs. By establishing these spectra we aim to gain a better understanding of the structure and properties of $(k,g)$-graphs and hope to use the acquired knowledge in both determining new orders of smallest $k$-regular graphs of girth $g$ as well as developing a set of tools suitable for constructions of extremal graphs with additional requirements. We combine theoretical results with computer based searches, and determine or determine up to a finite list of unresolved cases the $(k,g)$-spectra for parameter pairs for which the orders of the corresponding cages have already been established.
\end{abstract}

\section{Introduction}\label{intro}
\emph{The Cage Problem} is a problem in Extremal Graph Theory that calls for finding a \emph{smallest} 
$k$-regular graph of girth $g$ (with the graphs referred to as \emph{$(k,g)$-graphs}) together with its order $n(k,g)$, for 
all pairs $k \geq 3,g \geq 3$. Even though the problem has been around since the 1960's \cite{karteszi1960piani},
and even though it has received a considerable attention, we only know the exact value of 
$n(k,g)$ and the corresponding smallest $(k,g)$-graphs for a rather slim set of 
parameters \cite{exoo2012dynamic}. The problem is universally acknowledged to be hard, and there is no 
unifying theory behind it. The resolved cases are the result of a number of 
unrelated constructions and proving the extremality of any specific $(k,g)$-graph is in fact the 
core of the complexity of the problem. 
A natural lower bound on the orders of a $(k,g)$-graph is referred to as the \emph{Moore bound $M(k,g)$}, and is expressed as 
 
		\begin{equation}\label{moore}
			M(k,g) = \left\{
			\begin{array}{cc}
				1 + \sum_{i=0}^{\frac{g-3}{2}} k(k-1)^{i}, & g~ \text{odd}, \\ \\
				2\sum_{i=0}^{\frac{g-2}{2}}(k-1)^{i}, & g~ \text{even}. 
			\end{array}
			\right.
		\end{equation}

Maybe specifically due to its complexity, the Cage Problem problem has garnered significant attention 
\cite{exoo2012dynamic}. While much research has focused on finding individual cages, or at least on
finding smaller $(k,g)$-graphs than those found by anybody else (called \emph{record $(k,g)$-graphs}), in our work we take a broader approach and study the entire set of orders of $(k,g)$-graphs for a given $(k,g)$ pair. Given the complexity of the original Cage Problem, our approach might appear futile. Clearly, determining the entire set of orders for a class of graphs for which we cannot even
determine the order of its smallest member does not seem particularly promising. Nevertheless, we 
maintain that taking a broader view and studying the entire class of $(k,g)$-graphs (instead of just
those that stand a chance of being cages) might eventually bring progress even for the question about
the smallest order $n(k,g)$. 

Moreover, the idea of determining all the orders of 
$(k,g)$-graphs goes back to the funding fathers of the topic. Namely, Erd\H{o}s and Sachs \cite{erdos1963exist} proved
already in 1963 that $k$-regular graphs of girth at least $g$ and of order $2m$ exist for all $k \geq 2$, $ g \geq 3 $ and $ m \geq 2 \sum_{t=1}^{g-2}(k-1)^t$. A further improvement by 
Sauer \cite{sauer2extremal}, 1967, pushes the bound further down and specifically asserts the existence of 
$(k,g)$-graphs of all admissible orders greater than or equal to $2(k-1)^{g-2}$, in case of odd $g$, and 
$ 4(k-1)^{g-3}$ for $g$ even (the reader should always keep in mind that the order of an 
odd degree regular graph must be even, and hence, for odd degrees $k$, only even order 
$(k,g)$-graphs might exist). Based on these classical results, one can already conclude that 
for any given pair $ k \geq 3$ and $g\geq 3$ there must exist an order $N$ such that 
a $(k,g)$-graph of order $n$ exists for all $n \geq N$ (all even orders if $k$ is odd).
We shall denote the smallest $N$ with this property by $N(k,g)$, and observe that our choice
of notation implies the non-existence of a $(k,g)$-graph of order $N(k,g)-1$, for even $k$, or order $N(k,g)-2$, for odd $k$.
It is important to note that $N(k,g)$ is not necessarily equal to $n(k,g)$ as is demonstrated
in case of the $(3,8)$-graphs where $n(3,8)$ is known to be equal to $30$, but it has long been known that no $(3,8)$-graph of order $32$ exists. Thus, $ N(3,8) \geq 34 $, and in fact, as 
shown later, $ N(3,8) = 34 $.

Algorithmically, this means that determining the entire spectrum of orders of $(k,g)$-graphs is
essentially a finite task; provided one knows the number $N(k,g)$ and is able to search the 
entire (finite) space of $(k,g)$-graphs of orders smaller than $N(k,g)$. The aim of our paper
is completing this task for specific relatively small pairs $(k,g)$ by developing a set
of tools and constructions possibly applicable for other parameter pairs as well.

Before closing the introductory section, let us mention one more (perhaps far fetched) argument 
for studying the spectra of orders of $(k,g)$-graphs. A $(k,g)$-graph of order $M(k,g)$ 
(of order matching the Moore bound) is called a \emph{Moore graph}, and the parameters of all but one
Moore graph of odd girth have been known since the beginning of the 1960's \cite{hoffman1960moore}.
The parameters of the `missing' Moore graph are $(5,57)$, its order would be $3250$, and to 
this day it is not known whether such a graph exists or not. The clever linear algebra based 
idea that works for the rest of the degree/girth pairs leaves this case unresolved, and
even though a large part of the community might think the graph is impossible (should the graph 
exist, it would have to be in many aspects different from all the other Moore graphs; specifically,
it would have to have a rather small automorphism group \cite{mavcaj2010search}), no one has been able to 
come with an argument against its existence (despite some widely shared announcements). The main
point of the present paragraph is the observation that while one might not be able to show the 
nonexistence of a $(5,57)$-graph of order $3250$, a novel linear algebra or counting (e.g., 
\cite{jajcayova2016counting, jacayova2016improved}) based argument may show the impossibility of the existence of a $(5,57)$-graph of some specific order $n$ larger than $3250$. Should the hypothetical existence of a $(5,57)$-graph of order $3250$
in combination with some of the recursive results developed in this paper (or in the future)
imply the existence of a $(5,57)$-graph of order $n$, such contradiction would 
yield the non-existence of a Moore $(5,57)$-graph.

\section{Basic Concepts and Definitions}\label{sec:basics}

For each pair of parameters $(k,g)$, $k\geq 3$ and $g\geq 3$, the complete set of possible orders of \textit{connected} $k$-regular graphs of girth $g$ will be referred to as \textit{the spectrum of orders of $(k,g)$-graphs}, or \textit{the $(k,g)$-spectrum}. As argued already in the introductory section, 
finding the $(k,g)$-spectrum for a specific pair of parameters $(k,g)$ is extremely difficult, and it requires among other tasks determining the minimum order $n(k,g)$ of the smallest connected $k$-regular graph of girth $g$ (note that a smallest $(k,g)$-graph is necessarily connected).

The concept of the $(k,g)$-spectrum was introduced by Jajcay and Raiman \cite{jajcay2021spectra} and further studied by Eze, Jajcay, and Mih\'alov\'a \cite{eze2023algorithmic}. Results presented in here constitute a continuation of this previous work, building upon the theoretical framework established therein. In particular, several useful  properties of the $(k,g)$-spectra have been proven in \cite{eze2023algorithmic}, including the closeness of the spectra under addition and integral multiples.
These properties simplify the task of determining a specific $(k,g)$-spectrum by limiting the number of orders
$n$ for which one needs to find the corresponding $(k,g)$-graphs.
Typically, only numbers of order up to twice a specific relatively small member of the spectrum 
need to be considered to infer the entirety of the $(k,g)$-spectrum.

Our primary objectives in this paper are to fine-tune the recursive techniques and arguments
from \cite{eze2023algorithmic} that can be employed to determine a small set of orders for which the existence of 
$(k,g)$-graphs has to be resolved, and to employ a computational approach to generate the
missing members of the $(k,g)$-spectra for various families of $(k,g)$-graphs. To accomplish this, we have developed algorithms and methods that enable us to explicitly determine the $(k,g)$-spectrum for a given pair of parameters $(k,g)$. For a start, we concentrate on classes of $(k,g)$-graphs for which the orders of the corresponding $(k,g)$-cages are known. Finding spectra of these families of $(k,g)$-graphs, while feasible, 
still requires extensive searches for $(k,g)$-graphs of specific orders exceeding the order $n(k,g)$. 

Furthermore, since cages are only known for a limited set of parameter pairs, to overcome the challenge of finding suitable starting points for our algorithms, we also explore suitable alternatives. One such approach involves leveraging graphs with desirable properties, such as vertex-transitive graphs, as starting points. In this context, we make use of a comprehensive list of cubic vertex-transitive graphs provided by \cite{potovcnik2013cubic}.

Our paper is organized as follows: Section \ref{sec:Prop} presents properties of $(k,g)$-spectra which play key role in their determination.
Section \ref{sec:Spectra} contains the current status of knowledge about $(k,g)$-spectra for every pair $(k,g)$ for which the cages and their orders $n(k,g)$ are known. These include both complete (completely determined) as well as incomplete $(k,g)$-spectra.
In case of spectra we could not completely determine, we list the orders of graphs for which resolving the question of their existence or non-existence would allow one to completely determine
the corresponding spectrum and determine or present an upper bound on the corresponding number $N(k,g)$. In Section \ref{sec:Comp}, we describe our computational techniques. Finally, we present the conclusions and suggestions for further research in Section \ref{sec:Concl}.

\section{Properties of \texorpdfstring{$(k,g)$}{(k,g)}-Spectra}\label{sec:Prop}
In this section, we discuss techniques for combining $(k,g)$-graphs into larger graphs, which provide the theoretical basis for determining complete $(k,g)$-spectra. This builds on the results of Eze, Jajcay, and Mih\'alov\'a \cite{eze2023algorithmic} where it was established that for any $(k,g)$ pair, two $(k,g)$-graphs can be amalgamated through systematic edge deletions and additions to create a new, expanded $(k,g)$-graph while preserving the key properties of degree and girth.

To mention the possibly easiest example of such amalgamation, consider two (distinct or equal)
connected $(k,g)$-graphs $\Gamma_1, \Gamma_2$ of orders $n_1$ and $n_2$. Let $ u_1v_1 $ be an edge in $\Gamma_1$ and $ u_2v_2 $ be an edge in $\Gamma_2$. The disjoint union of $\Gamma_1$ and $\Gamma_2$ can then be made into a connected graph by removing the edges $u_1v_1$ and $u_2v_2$ and replacing them by either the edges $u_1v_2$ and $u_2v_1 $ or the edges $ u_1u_2$ and $v_1v_2$. In either case, the resulting connected graph is easily seen to be $k$-regular of girth (at least) $g$ and order $n_1+n_2$. In case one is interested in forming graphs of girth exactly equal to  $g$, 
a minor additional assumption will do the trick:
\begin{lemma}[\cite{eze2023algorithmic}]\label{two-graphs}
	Assume that $k \geq 3 $, $g \geq 3 $, and let $ \Gamma_1, \Gamma_2 $ be two $(k,g)$-graphs of
	orders $ n_1 $ and $ n_2 $, respectively, with the additional property that at least one of the two graphs contains
	an edge not contained in a $g$-cycle or that it contains at least two distinct $g$-cycles. 
	Then there exists a $(k,g)$-graph $ \Gamma $ of order $ n_1 + n_2 $.
\end{lemma}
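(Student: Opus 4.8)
The plan is to follow the amalgamation construction described immediately before the lemma, and to argue carefully that the extra hypothesis guarantees the girth of the resulting graph is exactly $g$ rather than merely at least $g$. First I would recall the basic operation: take the disjoint union of $\Gamma_1$ and $\Gamma_2$, pick an edge $u_1v_1$ in $\Gamma_1$ and an edge $u_2v_2$ in $\Gamma_2$, delete both, and reconnect using either the pair $\{u_1v_2, u_2v_1\}$ or the pair $\{u_1u_2, v_1v_2\}$. As already observed in the text, the resulting graph $\Gamma$ is connected, $k$-regular, has order $n_1+n_2$, and has girth at least $g$ (no short cycle can be created, since any new cycle using one of the two added edges must traverse a path of length at least $g-1$ inside one of the $\Gamma_i$, because these graphs have girth $g$ and the endpoints of the deleted edge are at distance $g-1$ in $\Gamma_i$ minus that edge). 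So the only thing left to prove is that $\Gamma$ actually contains a $g$-cycle.

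Next I would split into the two cases of the hypothesis. Suppose first that, say, $\Gamma_1$ contains an edge $e$ not lying on any $g$-cycle. The idea is to choose the deleted edge $u_1v_1$ of $\Gamma_1$ to be this edge $e$. Then every $g$-cycle of $\Gamma_1$ survives intact in $\Gamma$ (none of them used $e$), and since $\Gamma_1$ is a $(k,g)$-graph it has at least one $g$-cycle; hence $\Gamma$ has girth exactly $g$. For the second case, suppose $\Gamma_1$ contains two distinct $g$-cycles $C$ and $C'$. A single edge $u_1v_1$ cannot lie on both of them simultaneously — or more precisely, one can always find an edge of $\Gamma_1$ that lies on at most one of the two $g$-cycles, indeed an edge that lies on neither (if $C$ and $C'$ are distinct, $C$ contains an edge not in $C'$, so deleting an edge of $C'$ that is not in $C$ leaves $C$ intact). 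Choosing $u_1v_1$ to be an edge avoiding at least one of the two $g$-cycles, that cycle survives in $\Gamma$, so again $\Gamma$ has girth exactly $g$.

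There is one subtlety to handle with care: after the amalgamation the vertices $u_1,v_1$ (and $u_2,v_2$) have acquired new incident edges, so I should double-check that a surviving $g$-cycle of $\Gamma_1$ really is still a cycle in $\Gamma$ — this is immediate, since we only deleted the edge $u_1v_1$ and a surviving cycle by assumption does not use that edge, and we never delete any other edge of $\Gamma_1$. I should also note that we are free to perform whichever of the two reconnection options we like; the argument does not depend on the choice. Finally, I would remark that the roles of $\Gamma_1$ and $\Gamma_2$ are symmetric, so it suffices that \emph{one} of the two graphs satisfies the hypothesis, matching the statement.

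The main obstacle, such as it is, is not conceptual but lies in making the "girth at least $g$" claim fully rigorous — i.e.\ verifying that the reconnection cannot create a cycle shorter than $g$. The key point is that in $\Gamma_i$ with the edge $u_iv_i$ removed, the distance between $u_i$ and $v_i$ is at least $g-1$ (otherwise $\Gamma_i$ would have a cycle shorter than $g$ through $u_iv_i$), so any cycle of $\Gamma$ using exactly one new edge has length at least $g$, any cycle using both new edges has length at least $(g-1)+(g-1)+2 = 2g > g$, and cycles using no new edge are cycles of $\Gamma_1$ or $\Gamma_2$ and hence have length at least $g$. Everything else is a matter of choosing the deleted edge wisely, which the hypothesis makes possible.
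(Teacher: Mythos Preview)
Your argument is correct and follows exactly the amalgamation construction the paper sketches immediately before stating the lemma (which the paper cites from \cite{eze2023algorithmic} without giving its own proof). One cosmetic point: the case of a cycle using \emph{exactly one} new edge is vacuous, since the two added edges form a $2$-edge cut between the $\Gamma_1$-side and the $\Gamma_2$-side and every cycle crosses a cut an even number of times; your bounds for the $0$- and $2$-edge cases are what actually do the work.
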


Since it is easy to see that for $k \geq 3 $ and $ g \geq 3 $ any $(k,g)$-graph must 
contain either an edge not contained in a $g$-cycle or at least two distinct $g$-cycles,
the following corollary is obvious. 
\begin{corollary}\label{closed}
	Let $k \geq 3 $ and $ g \geq 3 $. The $(k,g)$-spectrum is closed under addition and
 multiplication by positive integers.
\end{corollary}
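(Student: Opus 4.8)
The plan is to derive Corollary~\ref{closed} from Lemma~\ref{two-graphs} by first establishing the structural fact flagged in the sentence immediately preceding the corollary: every $(k,g)$-graph with $k\geq 3$ and $g\geq 3$ either contains an edge lying on no $g$-cycle, or contains at least two distinct $g$-cycles. I would argue this by contradiction. Suppose $\Gamma$ is a $(k,g)$-graph in which every edge lies on a $g$-cycle and there is a unique $g$-cycle $C$. Then every edge of $\Gamma$ must lie on $C$, forcing $\Gamma = C$; but a cycle is only $2$-regular, contradicting $k\geq 3$. (One should be a little careful about the degenerate reading of "at least two distinct $g$-cycles" when $g$ is small, but for $g\geq 3$ the $g$-cycle $C$ exists because $\Gamma$ has girth exactly $g$, and the edge-count argument goes through.) This is the only genuine content beyond bookkeeping, so it is also the step I would watch most closely.

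Granting that structural fact, closure under addition is immediate: given any $n_1, n_2$ in the $(k,g)$-spectrum, pick connected $(k,g)$-graphs $\Gamma_1$ of order $n_1$ and $\Gamma_2$ of order $n_2$. By the structural fact, each of $\Gamma_1, \Gamma_2$ (in particular at least one of them) satisfies the hypothesis of Lemma~\ref{two-graphs}, so there is a $(k,g)$-graph of order $n_1 + n_2$; hence $n_1 + n_2$ lies in the spectrum. Here I would note that the spectrum was defined in terms of \emph{connected} graphs, and the graph produced by Lemma~\ref{two-graphs} is connected by construction (the amalgamation joins the two pieces), so nothing extra is needed.

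For closure under multiplication by a positive integer $m$, I would induct on $m$. The base case $m=1$ is trivial. For the inductive step, if $n$ is in the spectrum then by hypothesis $(m-1)n$ is in the spectrum; applying closure under addition to the pair $(m-1)n$ and $n$ gives $mn$ in the spectrum. (Alternatively one can simply observe that closure under addition already yields closure under multiplication by positive integers, since $mn = n + n + \cdots + n$.)

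The main obstacle, such as it is, is purely the first step — verifying the "either/or" dichotomy for all admissible $(k,g)$ — and it is mild: the essential point is that a graph of minimum degree at least $3$ cannot be a single cycle, together with the observation that if some edge fails to lie on any $g$-cycle we are immediately in the first case of Lemma~\ref{two-graphs}. Everything after that is a one-line application of the lemma plus a trivial induction.
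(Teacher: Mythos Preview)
Your proposal is correct and follows exactly the route the paper takes: the paper simply asserts that for $k\geq 3$, $g\geq 3$ any $(k,g)$-graph satisfies the hypothesis of Lemma~\ref{two-graphs} (calling this ``easy to see'') and then declares the corollary ``obvious''; you have supplied the short contradiction argument and the bookkeeping (connectedness, induction for multiples) that the paper omits.
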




As mentioned in the introduction, the only known universal upper bound on the numbers $N(k,g)$ follows from the work
of Sauer \cite{sauer2extremal}
\begin{equation}\label{sauer2extremal}
			N(k,g) \leq \left\{
			\begin{array}{cc}
				2(k-1)^{g-2}, & g~ \text{odd}, \\ \\
				4(k-1)^{g-3}, & g~ \text{even}. 
			\end{array}
			\right.
		\end{equation}
However, based on our computational results, Sauer's bound appears quite a big bigger than the actual values of $N(k,g)$, which 
appear to be close (often equal) to the orders $n(k,g)$ of cages.
The following corollary of Lemma~\ref{two-graphs} and Corollary~\ref{closed} is key to determining $N(k,g)$ for a fixed pair
$(k,g)$, and therefore ultimately to determining the entire $(k,g)$-spectrum. It also makes it clear
why we chose to focus on determining the $(k,g)$-spectra for parameters $(k,g)$ for which $n(k,g)$
has already been determined.
\begin{theorem}[\cite{eze2023algorithmic}]\label{Thm1}
	Let $k \geq 3 $ and $ g \geq 3 $. 
\begin{description}
\item{$(i)$}
	If $k$ is even and there exists an $N$ such that all consecutive integers
	$ N, N+1, N+2, \ldots, N+n(k,g)-1 $ belong to the $(k,g)$-spectrum, then all $n \geq N$ belong to this spectrum, and $ N \geq N(k,g) $.
\item{$(ii)$}
	If $k$ is odd and there exists an even $N$ such that all consecutive even integers $ N, N+2, N+4, \ldots, N+n(k,g)-2 $ belong to the $(k,g)$-spectrum, then all even $n \geq N$ belong to 
	this spectrum, and $ N \geq N(k,g) $.
 \end{description}
\end{theorem}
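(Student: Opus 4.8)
The plan is to deduce both parts from Corollary~\ref{closed} (closure of the $(k,g)$-spectrum under addition) together with the trivial but essential observation that $m := n(k,g)$, being the order of the connected $(k,g)$-cage, is itself a member of the $(k,g)$-spectrum. The whole argument is then a single division-with-remainder step, applied once for each residue class modulo $m$.

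For part $(i)$ I would start from the hypothesis that the $m$ consecutive integers $N, N+1, \dots, N+m-1$ all lie in the spectrum. Since $m$ lies in the spectrum and the spectrum is closed under addition, $N + j + tm$ lies in the spectrum for every $j \in \{0,1,\dots,m-1\}$ and every integer $t \geq 0$. Given an arbitrary $n \geq N$, write $n - N = tm + j$ with $0 \leq j \leq m-1$ and $t \geq 0$; then $n = N + j + tm$ is in the spectrum, so every $n \geq N$ belongs to the $(k,g)$-spectrum. Part $(ii)$ is identical except that the spectrum now consists of even integers and $m = n(k,g)$ is even, so the only point requiring care is the parity bookkeeping. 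Starting from the $m/2$ consecutive even integers $N, N+2, \dots, N+m-2$, one again obtains $N+j+tm$ in the spectrum for all even $j$ with $0 \leq j \leq m-2$ and all $t \geq 0$; for an even $n \geq N$ the remainder of $n-N$ modulo $m$ is even (both $n-N$ and $m$ being even), hence lies in $\{0,2,\dots,m-2\}$, and the same decomposition finishes the proof.

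Finally, in both cases the inequality $N \geq N(k,g)$ is immediate from the definition of $N(k,g)$ as the smallest threshold beyond which all admissible orders occur: we have just exhibited $N$ as one such threshold, so $N(k,g) \leq N$. I do not anticipate any real obstacle; the argument is short and elementary once Corollary~\ref{closed} is available, and the only things to keep in mind are that $n(k,g)$ itself must be counted as a spectrum member and that in case $(ii)$ all quantities involved are even, so that the residues modulo $m$ stay within the required range.
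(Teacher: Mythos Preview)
Your argument is correct and is exactly the derivation the paper has in mind: the theorem is stated there without proof (it is cited from \cite{eze2023algorithmic}) but is explicitly introduced as a corollary of Lemma~\ref{two-graphs} and Corollary~\ref{closed}, and your division-with-remainder argument using $n(k,g)$ as the step size is precisely how one extracts it from those closure properties. There is nothing to add.
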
 

Thus, in order to determine the entire spectrum of orders of $(k,g)$-graphs with a known $n(k,g)$, 
the key ingredient not covered by the above theorem is determining the existence or non-existence
of $(k,g)$ graphs of orders between $n(k,g)$ and $2n(k,g)$, and consequently, determining the existence of 
graphs of orders $2n(k,g)+in$, $ i \geq 1 $, for orders $n$ for which $(k,g)$-graphs or order
$n$ do not exist. We address these questions via refining the recursive constructions of $(k,g)$-graphs
from smaller $(k,g)$-graphs to include vertex and/or edge deletion. One example of
such an approach is the following theorem. The Moore trees referred to in its proof are subtrees of the trees 
considered in the usual proof of the Moore bound. 
Namely, given a $(k,g)$-graph $\Gamma$ and its vertex $u$, the subset of 
vertices of $\Gamma$ of distance not exceeding $ r \leq \lfloor \frac{g}{4} \rfloor $ induces a tree of depth
$r$ in which all vertices but the leaves are of degree $k$. We denote this tree by
$\mathcal{T}_{k,r}$, and note that $|V(\mathcal{T}_{k,0})|=1$, $|V(\mathcal{T}_{k,1})|=1+k$,  and  $ |V(\mathcal{T}_{k,r})|=1 + k + k(k-1) + \ldots + k(k-1)^{r-1}$ for $ r \geq 2$. 

 \begin{theorem}\label{n5}
 	Let $k\geq 3$, $g\geq 4$, and $ 0 \leq r \leq \lfloor \frac{g}{4} \rfloor $.
  Then, for every connected $(k,g)$-graph $\Gamma$ of order $n$, there exists a connected $(k,g')$-graph $\Gamma'$ of order $2(n-|V(\mathcal{T}_{k,r})|)$ and girth $g' \geq g$.
 \end{theorem}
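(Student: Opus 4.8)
The plan is to build $\Gamma'$ from two disjoint copies of $\Gamma$ by excising a Moore tree from each and splicing the two remainders along their new ``free ends''. First I would fix a vertex $u$ of $\Gamma$ and let $T=\mathcal{T}_{k,r}$ be the ball of radius $r$ about $u$; since $r\le\lfloor g/4\rfloor<g/2$ this is an induced tree whose leaves are exactly the vertices at distance $r$ from $u$, and deleting $V(T)$ destroys precisely the $D:=k(k-1)^{r}$ edges joining a leaf of $T$ to a vertex at distance $r+1$. Writing $R:=\Gamma-V(T)$, of order $n-|V(\mathcal{T}_{k,r})|$, the graph $R$ thus carries $D$ free ends, located at the boundary vertices (distance $r+1$ from $u$); for $g\ge5$ these boundary vertices are pairwise distinct, so each carries a single free end, while $g=4$ needs a small separate check. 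Now take an isomorphic copy $\bar\Gamma$ with $\bar u,\bar T,\bar R$ corresponding to $u,T,R$ under an isomorphism $\varphi$, and define $\Gamma'$ on $V(R)\cup V(\bar R)$ by keeping all edges of $R$ and of $\bar R$ and adding, for a bijection $\sigma$ between the $D$ free ends of $R$ and those of $\bar R$, the $D$ edges joining paired free ends. By construction $\Gamma'$ is $k$-regular of order $2(n-|V(\mathcal{T}_{k,r})|)$, so everything reduces to choosing $\sigma$ so that $\Gamma'$ is connected and has girth at least $g$.

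For the girth I would argue that any cycle of $\Gamma'$ either stays inside $R$ or inside $\bar R$ --- hence has length $\ge g$, these being subgraphs of $\Gamma$ --- or else it uses an even number $2t\ge2$ of the new edges and decomposes into $t$ paths in $R$ and $t$ paths in $\bar R$ joining boundary vertices, so that its length is at least $2t+\sum_a d_R(x_{i_{2a}},x_{i_{2a+1}})+\sum_a d_{\bar R}(\bar x_{\sigma(i_{2a+1})},\bar x_{\sigma(i_{2a+2})})$. The engine of the estimate is the inequality
\[
d_R(x_i,x_j)\ \ge\ g-2-d_T(\ell_i,\ell_j),
\]
valid because a shortest $R$-path from $x_i$ to $x_j$, together with the path $x_i\,\ell_i\cdots\ell_j\,x_j$ running through the tree, forms a cycle of $\Gamma$. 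Since $d_T$ never exceeds $2r\le 2\lfloor g/4\rfloor$ and $d_{\bar R}=d_R\circ\varphi^{-1}$, the bookkeeping is exactly balanced at $r=\lfloor g/4\rfloor$: for $g\equiv2,3\pmod 4$ even $\sigma=\mathrm{id}$ works, and in general one only has to worry about those few pairs of free ends whose $R$-distance attains the minimum $g-2-2r$ (such pairs necessarily lie in distinct branches of $T$ at $u$) and to arrange that $\sigma$ never maps such a pair to another such pair. Because the ``short-distance'' graph on the set of free ends is very sparse and $k$-partite --- large girth forbids almost all short connections between boundary vertices --- a suitable $\sigma$ exists; establishing this is a packing-type statement and is where the argument is most technical.

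For connectivity I would use the remaining freedom in $\sigma$: every component of $R$ (and of $\bar R$) meets the boundary, since $\Gamma$ is connected, so after choosing $u$ suitably (for instance inside a leaf block of $\Gamma$, far enough from its cut vertex that $T$ stays inside that block, keeping the number of components of $\Gamma-V(T)$ small relative to $D$) one can route the $D$ new edges so that the auxiliary graph on the components of $R$ and $\bar R$ is connected, without disturbing the girth conditions above. Granting such a $\sigma$, $\Gamma'$ is a connected $k$-regular graph of girth $g'\ge g$, as required. I expect the main obstacle to be precisely the existence of a single $\sigma$ doing both jobs at once: girth-preservation (tight at the extremal value $r=\lfloor g/4\rfloor$, requiring a genuine though elementary packing argument on the boundary of the Moore tree) together with connectivity, rather than the individual routine verifications of regularity and order.
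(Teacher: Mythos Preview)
Your construction is the paper's: excise $\mathcal{T}_{k,r}$ from two copies of $\Gamma$ and splice the resulting degree-$(k-1)$ vertices (the leaves of $\mathcal{T}_{k,r+1}$) across. The paper does not treat $\sigma$ as a free parameter, though; it simply takes $\sigma$ to be the restriction to the leaves of a rooted-tree isomorphism $\varphi:\mathcal{T}_{k,r+1}^1\to\mathcal{T}_{k,r+1}^2$, asserts that any two boundary vertices are at distance at most $2r$ in $\Gamma$ (hence at least $g-2r$ in $R$), and concludes $g'\ge 2(g-2r)+2\ge g$ in one line. No packing, no separate connectivity argument.

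Your sharper bound $d_R(x_i,x_j)\ge g-2-d_T(\ell_i,\ell_j)\ge g-2(r+1)$ is the honest one: the boundary vertices sit at level $r+1$, so their tree distance can reach $2(r+1)$, not $2r$. You have in effect spotted that the paper's quick estimate is off by $2$, which is exactly why $\sigma=\mathrm{id}$ can fail at $r=\lfloor g/4\rfloor$ when $g\equiv 0,1\pmod 4$ (in the Tutte--Coxeter graph, $k=3$, $g=8$, $r=2$, two boundary vertices can share a neighbour at distance $4$, and the identity pairing produces a $6$-cycle). So your instinct to choose $\sigma$ more carefully is well founded.

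But that is also where your genuine gap lies. You assert that a suitable $\sigma$ exists --- and moreover one that simultaneously makes $\Gamma'$ connected --- without proving it. The ``short-distance graph'' on the boundary is not always as sparse as you suggest: in the Tutte--Coxeter example it is already $4$-regular on only $12$ vertices, so an appeal to sparsity and $k$-partiteness does not settle the matter. The claim that one can permute the free ends so that no short pair maps to a short pair, while also threading all components of $R$ and $\bar R$ together, needs an actual argument. As written, your proposal identifies the hard step correctly but does not carry it out.
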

 
 \begin{proof}
 Let $\Gamma$ be a connected $(k,g)$-graph of order $n$. Consider two disjoint copies of $\Gamma$, say $\Gamma_1$ and $\Gamma_2$. In each copy $\Gamma_i$, select a vertex $u_i$ together with a Moore tree  $\mathcal{T}_{k,r+1}^i$
 rooted in $u_i$, $i = 1,2$.
 Let $\varphi: V(\mathcal{T}_{k,r+1}^1) \to V(\mathcal{T}_{k,r+1}^2) $ be an isomorphism of rooted trees mapping 
 $u_1$ to $u_2$ and the leaves of $\mathcal{T}_{k,r+1}^1$ to the leaves of $\mathcal{T}_{k,r+1}^2$. Remove the vertices and all their incident edges of the respective Moore subtrees $\mathcal{T}_{k,r}^i$ from both copies $\Gamma_i$, 
 denote the resulting graphs $\Gamma_i^*$, form the disjoint union of the graphs $\Gamma_i^*$, and connect each vertex of degree $k-1$ in the first graph via an edge to its image under $\varphi$ in the second graph (the vertices of degree $k-1$ in each copy are the leaves
 of the corresponding trees $\mathcal{T}_{k,r+1}^i$). Denote the amalgamated graph obtained in this way by $\Gamma^*$. Then, $\Gamma^*$ is clearly $k$-regular of order $2(n-|V(\mathcal{T}_{k,r})|)$. We claim that the girth of
 $\Gamma^*$ is at least as big as the original girth $g$ of $\Gamma$.
 
 To show that, let $v_1^1, v_2^1, \dotsc, v_t^1$, $t=k(k-1)^r$, be the vertices of degree $k-1$ in $\Gamma_1 - \mathcal{T}_{k,r}^1$, and 
 let $v_1^2, v_2^2, \dotsc, v_t^2$ be their respective images under $\varphi $ in $\Gamma_2 - \mathcal{T}_{k,r}^2$.  
 Since the girth of $\Gamma$ is $g$ and the distance between any two of the vertices $v_1^1, v_2^1, \dotsc, v_t^1$
 in $\Gamma_1$ is at most $2r$, their distance in $\Gamma_1 - \mathcal{T}_{k,r}^1$ is at least $g-2r$. The same holds
 true for the vertices $v_1^2, v_2^2, \dotsc, v_t^2$.
 Suppose, for sake of contradiction, that $\Gamma^*$ has girth $g' < g$. As no $g'$-cycles exist within either copy $\Gamma_{i}-\mathcal{T}_{k,r}^i$, any $g'$-cycle in $\Gamma^*$ must include at least two of the added inter-copy edges. Without loss of generality, assume a cycle containing the edges $v_1^1v_1^2$ and $v_i^1v_i^2$. However, the distance between $v_1^1$ and $v_i^1$ as well as the distance between $v_1^2$ 
 and any $v_i^2$ is at least $g-2r$. Thus, $g' \geq 2(g-2r)+2 \geq g $, yielding the desired contradiction.  
 \end{proof}
 
 The use of the above theorem in determining the existence of $(k,g)$-graphs of orders between $n(k,g)$ and $2n(k,g)$ should now be obvious. Selecting the starting graph $\Gamma$ to be a $(k,g)$-cage of order $n(k,g)$ and using
 Theorem~\ref{n5} yields the existence of $k$-regular graphs of orders $ 2(n(k,g)- |V(\mathcal{T}_{k,r})|)$, for all 
 $ 0 \leq r \leq \lfloor \frac{g}{4} \rfloor $; all of them of girths at least $g$. Even though Theorem~\ref{n5} does not 
 guarantee the existence of graphs of girth exactly equal to $g$, inspecting the construction described in its proof
 suggests that the girth of these graphs will be $g$ for the majority of graphs obtained in this way. It is a
 consequence of the empirical observation that most cages
 contain at least one $g$-cycle that does not share vertices with the $\mathcal{T}_{k,r}$ tree even in the case 
 when $r = \lfloor \frac{g}{4} \rfloor$. Notably, while the reader might feel this to be obviously true for all cages, 
 we have been unable to prove such an universal claim.

 We conclude the section with two intriguing `negative' results potentially leading to holes at the very beginning of 
 the corresponding spectra. The \emph{excess} of a $(k,g)$-graph $\Gamma$ is the difference between its order and
 the corresponding Moore bound $M(k,g)$.

  \begin{theorem}[\cite{Big&Ito}]\label{big-ito}
  	 Let $\Gamma$ be a $(k, g)$-cage of girth $g=2m \geq 6$ and excess $e$. If
  	$e\leq k-2$, then $e$ is even and $\Gamma$ is bipartite of diameter $m+1$.\\
  	Let $D(k, 2)$ be the incidence graph of a symmetric $(v, k, 2)$-design.
  	 Let $G$ be a $(k, g)$-cage of girth $g=2m \geq 6$ and excess
  	$2$. Then $g = 6$, $G$ is a double-cover of $D(k, 2)$, and $k$ is not congruent to $5$ or $7 \pmod{8}$.
  \end{theorem}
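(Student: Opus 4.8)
The plan is to run the classical spectral–combinatorial analysis of graphs whose girth is close to the Moore bound. Write $n = M(k,2m)+e$ for the order of $\Gamma$ and let $A$ be its adjacency matrix. The hypothesis $g=2m$ forces a \emph{unique} shortest path between any two vertices at distance at most $m-1$, so the distance-$i$ adjacency matrix $A_i$ equals $v_i(A)$ for $0\le i\le m-1$, where $v_0=1$, $v_1=x$, $v_2=x^2-k$ and $v_{j+1}=xv_j-(k-1)v_{j-1}$. The first step is to pin down the diameter. Since $e>0$ the graph is not a Moore graph, so its diameter is at least $m+1$; on the other hand, fix a vertex $u$ and note (by the usual girth argument) that every vertex at distance $m-1$ from $u$ must send each of its remaining $k-1$ edges to distance $m$. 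Counting the vertices in the successive distance classes around $u$ and using $e\le k-2$ then shows the diameter cannot be as large as $m+2$, so it equals exactly $m+1$, and for every $u$ the classes at distances $m$ and $m+1$ together contain $(k-1)^{m-1}+e$ vertices.

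Next I would convert this local picture into matrix identities. Computing $A\cdot v_{m-1}(A)$ and $A\cdot v_m(A)$ and separating contributions by distance yields a relation of the shape $v(A)=J-X$, where $v$ is an explicit polynomial, $J$ is the all-ones matrix and $X$ is a small nonnegative matrix supported on the distance-$(m+1)$ pairs \emph{together with}, a priori, pairs at odd distance --- and it is precisely these odd-distance terms that encode a possible failure of bipartiteness. The crucial and hardest step is to rule them out, i.e.\ to prove $\Gamma$ is bipartite: the idea is that a short enough odd closed walk, interacting with the rigid tree structure of the radius-$m$ balls, would create strictly more than $(k-1)^{m-1}+e$ vertices in the outer two distance classes of some vertex, contradicting $e\le k-2$; carrying this out needs a careful count of how many ``extra'' neighbours a boundary vertex of a ball can have. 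Once bipartiteness is in hand, $-k$ is an eigenvalue of the connected $k$-regular graph $\Gamma$, the two colour classes are forced to have equal size (whence $e$ is even), and the distance distribution becomes symmetric, giving the stated ``bipartite of diameter $m+1$'' conclusion.

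For the excess-$2$ refinement one sharpens the same machinery. With $e=2$ the matrix $X$ has an extremely constrained shape, and feeding $v(A)=J-X$ into the requirement that every eigenvalue multiplicity of $A$ be a nonnegative integer eliminates all even girths $2m\ge 8$, exactly as in the classical nonexistence arguments for large-girth Moore graphs; this leaves $g=6$. For $g=6$ the outer distance class of each vertex is a single ``antipodal'' vertex, so the antipodal pairs form a perfect matching whose contraction turns $\Gamma$ into the incidence graph of a symmetric $2$-design with parameters $\bigl(1+k(k-1)/2,\,k,\,2\bigr)$, i.e.\ a biplane; equivalently $\Gamma$ is a connected double cover of that incidence graph $D(k,2)$, and the order check $2\bigl(2+k(k-1)\bigr)=M(k,6)+2$ confirms consistency.

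Finally, the restriction $k\not\equiv 5,7\pmod 8$ comes from number-theoretic obstructions to this configuration: a Bruck--Ryser--Chowla argument on the biplane (for the residues where the block count $1+k(k-1)/2$ is even, forcing $k-2$ to be a perfect square) rules out $k\equiv 7\pmod 8$, while an integrality condition on the multiplicities of the additional eigenvalues (those of the signed matrix defining the double cover, involving $\sqrt{k-2}$) rules out $k\equiv 5\pmod 8$. The main obstacles I anticipate are the bipartiteness dichotomy of the first half and, in the second half, verifying that these combined design-theoretic and integrality obstructions are sharp --- that they forbid precisely the residues $5$ and $7$ modulo $8$ and nothing coarser; everything else reduces to (lengthy) bookkeeping within the tree structure imposed by the girth.
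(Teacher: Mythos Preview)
The paper does not prove this theorem at all: it is quoted verbatim from Biggs and Ito's original work (the citation \texttt{[Big\&Ito]}) and is used only as a black box to derive Corollary~\ref{Tatiana}. There is therefore no ``paper's own proof'' to compare your proposal against.

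That said, your sketch is broadly faithful to the actual Biggs--Ito argument: the distance-polynomial/adjacency-matrix machinery, the identification of an ``excess matrix'' $X$ with $v(A)=J-X$, the integrality of eigenvalue multiplicities, and the antipodal-quotient interpretation for $e=2$ are all genuine ingredients of their proof. Two caveats. First, your proposed route to bipartiteness (a direct counting argument on odd closed walks forcing too many boundary vertices) is not how Biggs and Ito proceed; they obtain bipartiteness from the spectral side, and your combinatorial version, as stated, is vague at the point you yourself flag as ``crucial and hardest''. Second, your split of the $k\not\equiv 5,7\pmod 8$ obstruction into ``BRC handles $7$, eigenvalue integrality handles $5$'' should be checked carefully against the parity of $v=1+k(k-1)/2$: for $k\equiv 5\pmod 8$ one has $v$ odd, so the even-$v$ Bruck--Ryser--Chowla clause you invoke does not apply there, and the correct argument routes both residues through the same integrality/rationality analysis rather than through two separate mechanisms. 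These are repairable, but as written the proposal is a plan with acknowledged gaps rather than a proof.
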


   \begin{corollary}\label{Tatiana}
   \begin{description}
   \item{$(i)$} Let $k \geq 3 $, and let $g \geq 6$ be even. 
   Then, there exists no $(k,g)$-graph with odd excess $e \leq k-2$.
   \item{$(ii)$} Let $k \geq 3 $, and let $g \geq 8$ be even.
   Then, there exists no $(k,g)$-graph of excess $2$.
   \end{description}
  \end{corollary}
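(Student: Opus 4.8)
The plan is to read both statements off the Biggs--Ito structure theorem (Theorem~\ref{big-ito}): in each part I would assume a $(k,g)$-graph of the prohibited excess exists and push the hypotheses of that theorem until they collapse. Write $g=2m$. The one point that needs care is that Theorem~\ref{big-ito} is phrased for cages while the corollary concerns arbitrary $(k,g)$-graphs, so I would first record the Biggs--Ito dichotomy in the generality in which \cite{Big&Ito} actually establishes it --- for \emph{every} $k$-regular graph of girth $2m$ with small excess, not only for minimum-order ones --- with Theorem~\ref{big-ito} being the special case for cages. Where possible I will also note how far the cage version alone, together with the minimality of cages, takes us.

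For part~$(i)$, suppose $g\geq 6$ is even and $\Gamma$ is a $(k,g)$-graph of odd excess $e\leq k-2$. Applying the general form of Biggs--Ito to $\Gamma$ itself forces $e$ to be even, contradicting its oddness, and that is the whole argument. Using only the cage version of Theorem~\ref{big-ito}, the minimality of cages yields a $(k,g)$-cage $C$ of excess $e_C\leq e\leq k-2$, hence with $e_C$ even and $e_C<e$; but this by itself does not preclude $\Gamma$, which is precisely the gap I discuss below. For part~$(ii)$, suppose $g\geq 8$ is even and $\Gamma$ is a $(k,g)$-graph of excess $2$; here I would apply the second assertion of Biggs--Ito (excess $2$ forces $g=6$) to $\Gamma$ and contradict $g\geq 8$. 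A partial reduction to cages is available: if the cage $C$ has excess at least $2$, then the order $M(k,g)+2$ of $\Gamma$ is at least the order of $C$, so $C$ has excess exactly $2$ and $\Gamma$ is itself a cage, whereupon the second assertion of Theorem~\ref{big-ito} gives $g=6$, a contradiction; the case $e_C=1$ is impossible since $1\leq k-2$ would force (first assertion) $e_C$ to be even; and only $e_C=0$ --- that is, $C$ a Moore graph of even girth at least $8$, which genuinely occurs, for instance as the incidence graph of a generalized quadrangle when $g=8$ --- escapes the cage-level argument and must be handled through the general statement applied to $\Gamma$.

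I expect the main obstacle to be not combinatorial but exactly this passage from the cage-phrased Theorem~\ref{big-ito} to a claim about all $(k,g)$-graphs: as the case analysis for~$(ii)$ shows, there is no purely formal reduction, because once the cage's own excess is even and strictly below the forbidden value --- in particular whenever a Moore graph of the relevant even girth exists --- the cage statement imposes nothing on a larger graph of the forbidden excess. Once the Biggs--Ito dichotomy is on the table in its regular-graph generality (every $k$-regular graph of girth $2m$ of excess at most $k-2$ has even excess, and every one of excess exactly $2$ has $g=6$), part~$(i)$ is a one-line parity check and part~$(ii)$ a one-line girth check.
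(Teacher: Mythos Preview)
Your approach is exactly the one the paper intends: the corollary is stated immediately after Theorem~\ref{big-ito} with no proof at all, so the paper treats both parts as direct readings of Biggs--Ito. You have correctly identified and handled the one genuine subtlety the paper glosses over --- that Theorem~\ref{big-ito} is stated for cages while the corollary speaks of arbitrary $(k,g)$-graphs, and that the reduction to cages does not close the gap (notably when a Moore graph of the relevant even girth exists). Your resolution, invoking the regular-graph generality in which \cite{Big&Ito} actually proves the dichotomy, is the right fix; with that in hand, both parts are indeed one-line checks, and your proposal is slightly more careful than the paper itself.
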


  \section{\texorpdfstring{$(k,g)$}{(k,g)}-Spectra for Parameters with Known \texorpdfstring{$n(k,g)$}{n(k,g)}} \label{sec:Spectra}
  Since determining the order $n(k,g)$ of $(k,g)$-cages for new parameter pairs $(k,g)$ is extremely hard (with
  the last new pairs determined in the 2011 paper \cite{exoo2011computational}), we could not determine the entire spectra of orders of 
  $(k,g)$-graphs for any parameter pairs $(k,g)$ for which $n(k,g)$ is not already known. As explained in the 
  previous section, even when $n(k,g)$ is already known, determining the entire $(k,g)$-spectrum requires finding 
  or showing the non-existence of a (possibly prohibitive) number of graphs of orders between $n(k,g)$ and $2n(k,g)$; 
  and often also of larger orders.
  Theorem~\ref{n5} yields only very few graphs, and the rest need to be found in one way or another. In this 
  section, we report the results of our attempts at determining the entire spectra of orders of graphs where 
  $n(k,g)$ is known. As all orders sought for in this project fall into the range where exhaustive lists of 
  $k$-regular graphs are not feasible, our results are based on highly subjective choices for classes of graphs
  to be considered. Relying on a number of heuristics, we tend to use graphs that exhibit at least some levels 
  of symmetry, graphs whose girths are relatively easy to determine, or graphs listed in searchable databases.
  In cases (quite a few of them) where no databases provided us with examples of specific orders, we relied on 
  searches including a number of random choices followed by exhaustive searches of the remaining possibilities. 
  These were of two types. Either we used graphs of orders close to the ones sought for and
  tried to manipulate them to obtain the desired order or we had to construct the desired examples from scratch.
  While, obviously, the majority of our graphs have been found by the first method, constructing $(k,g)$-graphs 
  of orders close to $2n(k,g)$ is not completely out of question (and we managed to do that for some few cases).
  This is probably due to the empirically observed fact that the further one moves away from the value $n(k,g)$,
  the more graphs of these orders exist, and therefore one stands a better chance of finding such graphs when making 
  some random choices along the way. 

  Despite our best efforts, we have not been able to completely determine the spectra for all parameter pairs
  with known cages. Nevertheless, we list our results for the majority of these pairs while clearly indicating 
  those for which we have obtained the entire spectrum, as well as the orders of graphs we have not been able to find or 
  prove their non-existence for the spectra we were unable to complete.
  
  We begin with cubic graphs where cages are known for $g \in \{3, 4, 5, \dots, 12\}$. We found the entire $(3,g)$-spectra for $g = 3, 4, 5, \dots, 9$, and we were not able to complete our search for $g=10, 11,$ and $ 12$. For these three
  girths, we provide all the orders we have been able to determine and list the orders that proved elusive.
  Recall that in case of an odd degree $k$, the $(k,g)$-spectrum consists exclusively of even numbers.
  
  Throughout our entire text, we adopt the \emph{unified notation $Graph(k,g,n)$ for the $(k,g)$-graphs of order $n$} found using techniques 
  discussed in this text. Should more than one graph with these parameters be considered, 
  we add an additional parameter $i$, and talk about the $Graph(k,g,n;i)$. We only use this notation for new graphs we
  have not found in any previously available data sets, and all the $Graph(k,g,n)$-graphs can be accessed at \cite{EzeJJZ}.

  \subsection{Completely Determined \texorpdfstring{$(3,g)$}{(3,g)}-Spectra}\label{sect3}  
  \begin{description}
  \item{\bf $(3,3)$-Spectrum:}\label{sect30} \sloppy
  It is easy to see that the $(3,3)$-spectrum consists of all positive even integers greater than or equal to $4$
  \[ \{ 4,6,8,10, 12, \ldots \} . \]
  Since the $(3,3)$-cage, $K_4$ is of order $4$, Theorem~\ref{n5} together with Corollary~\ref{closed} and 
  Theorem~\ref{Thm1} yields our claim as long as we can find a $(3,3)$-graph of order $6$. One such graph can
  be obtained by taking two triangles and adding three edges between their vertices to obtain a $3$-regular graph.
  The process is illustrated by the series of graphs included in Fig. \ref{cages3-3}.

 Taking a different approach, starting with the graph $K_4$, we obtained the graphs $Graph(3, 3, 6)$ and $Graph(3, 3, 8)$ by iteratively applying Construction \ref{tec2} described in \Cref{sec:Comp}.

 \begin{figure}[ht]%
		\centering
		\captionsetup[subfloat]{labelformat=empty}
		\subfloat[\centering (a) $(3,3)$-cage]
		{{\includegraphics[width=2.5cm]{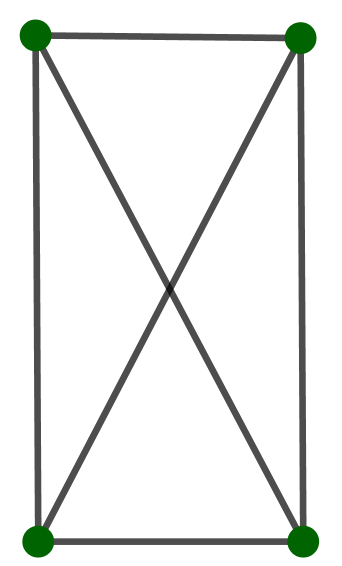} }}
		\captionsetup[subfloat]{labelformat=empty}			
		\subfloat[\centering (b) $(3,3)$-graph of order $6$]
		{{\includegraphics[width=2.5cm]{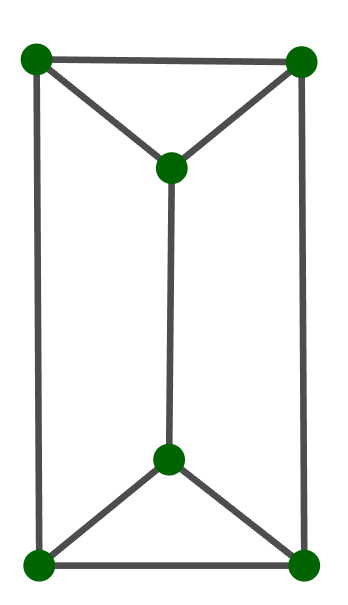} }}
		\captionsetup[subfloat]{labelformat=empty}
		\subfloat[\centering (c) $(3,3)$-graph of order $8$ obtained from $2$ $K_4$]
		{{\includegraphics[width=2.5cm]{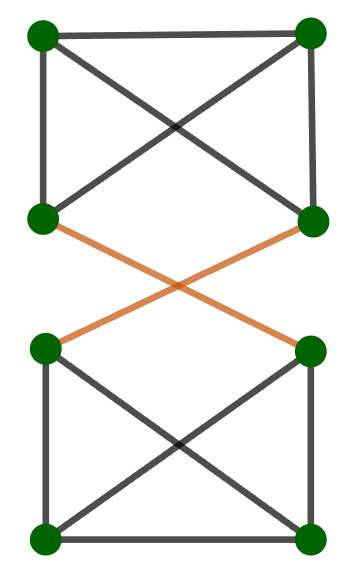} }}
		\captionsetup[subfloat]{labelformat=empty}			
		\subfloat[\centering (d) $(3,3)$-graph of order $10$ obtained from a $K_4$ and $(3,3)$-graph of order $6$]
		{{\includegraphics[width=2.7cm]{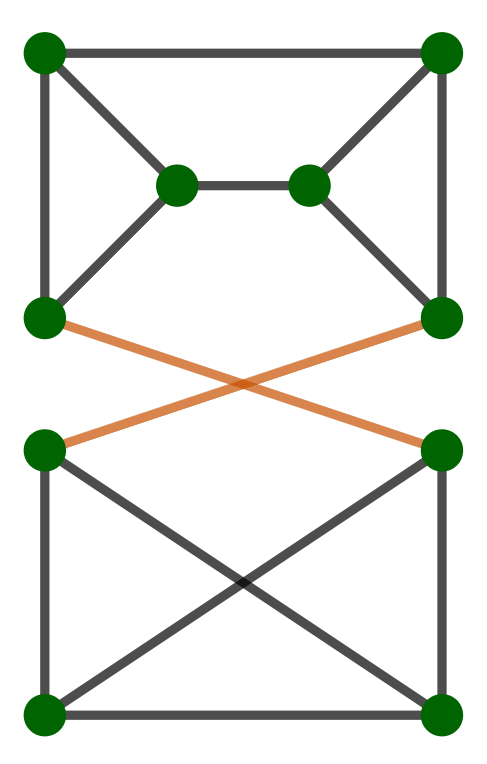} }}
		\caption{Examples of $(3,3)$-Graphs Obtained by Theorem \ref{n5}}
		\label{cages3-3}
	\end{figure}


  \item{\bf $(3,4)$-Spectrum:}\label{sect31}
  The $(3,4)$-spectrum consists of all positive even integers greater than or equal to $6$
  \[ \{ 6,8,10, 12, \ldots \} . \]
  The first element is the order of $K_{3,3}$, the $(3,4)$-cage, which is also equal to $N(3,4)$ and the Moore bound $M(3,4)$. Starting with the graph $K_{3,3}$, we obtained the graphs $Graph(3, 4, 8)$, $Graph(3, 4, 10)$ and $Graph(3, 4, 12)$ by iteratively applying Construction \ref{tec2}.
  The entire spectrum can be generated from $K_{3,3}$ by Construction \ref{tec2} described in Section \ref{sec:Comp}. 
  
  \item{\bf $(3,5)$-Spectrum:}\label{sect32}
  The $(3,5)$-spectrum consists of all positive even integers greater than or equal to $10$ 
  \[ \{10, 12, 14, 16, \dots\}, \] 
  starting from the order of the Petersen graph equal to $N(3,5)$ and $M(3,5)$. Starting with the Petersen graph, we obtained the graphs $Graph(3, 5, 12)$, $Graph(3, 5, 14)$, $Graph(3, 5, 16)$, $Graph(3, 5, 18)$ and $Graph(3, 5, 20)$ by iteratively applying Construction \ref{tec2} (for illustration, consult Figure~\ref{cages}; the graph in
  (c) is isomorphic to the one constructed in (b)). Note that there exist exactly two non-isomorphic $(3,5)$-graphs of order $12$.
  The entire spectrum can be generated by Constructions \ref{tec1} or \ref{tec2}, described in Section \ref{sec:Comp}.  

  \begin{figure}[ht]%
		\centering
		\captionsetup[subfloat]{labelformat=empty}
		\subfloat[\centering (a) $(3,5)$-cage]
		{{\includegraphics[width=3.5cm]{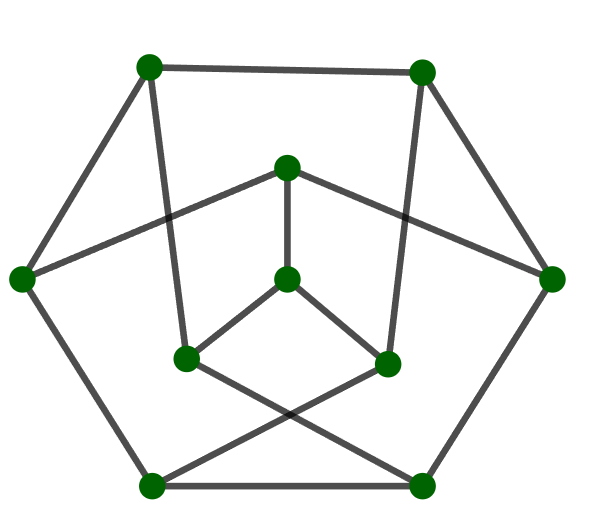} }}
		\captionsetup[subfloat]{labelformat=empty}			
		\subfloat[\centering (b) A subdivision of $(3,5)$-cage]
		{{\includegraphics[width=3.5cm]{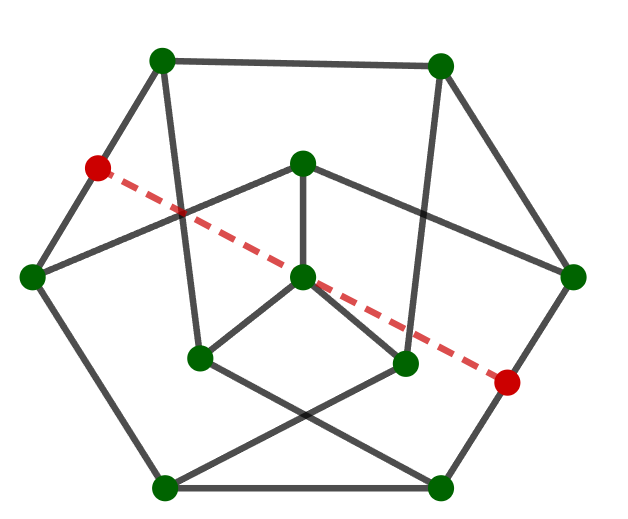} }}
		\captionsetup[subfloat]{labelformat=empty}
		\subfloat[\centering (c) $(3,5)$-graph of order $12$]
		{{\includegraphics[width=3cm]{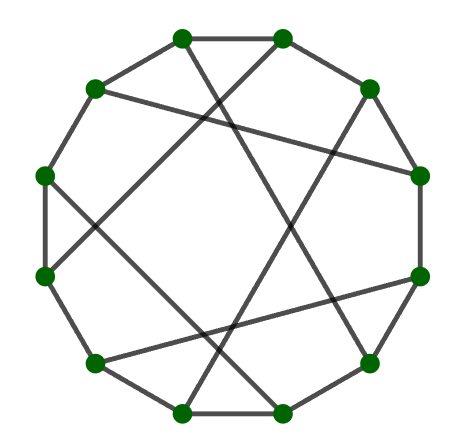} }}
		\captionsetup[subfloat]{labelformat=empty}			
		\subfloat[\centering (d) $(3,5)$-graph of order $14$]
		{{\includegraphics[width=3cm]{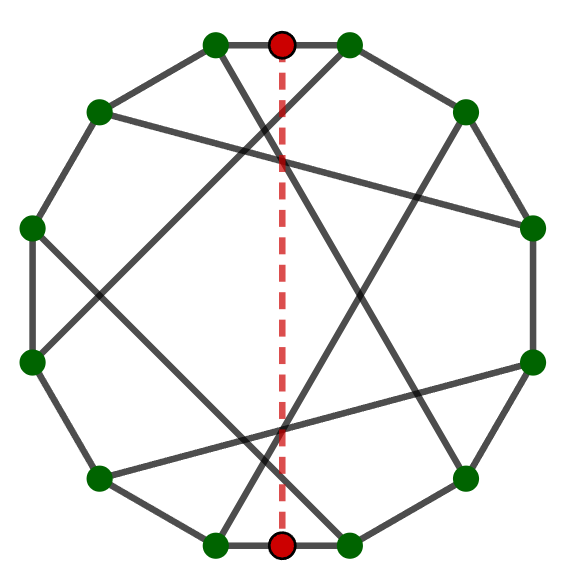} }}
		\caption{Examples of Edge Subdivision Construction }
		\label{cages}
	\end{figure}
  
  \item{\bf $(3,6)$-Spectrum:}\label{sect33}
  The $(3,6)$-spectrum consists of all positive even integers greater than or equal to $14$
  
  \[ \{14, 16, 18, 20, \dots\} , \]
  with the order of the Heawood graph equal to $N(3,6)$ and $M(3,6)$ as its first element. 
  Starting with the Heawood graph, we obtained the graphs $Graph(3, 6, 16)$, $Graph(3, 6, 18)$, $Graph(3, 6, 20)$, $Graph(3, 6, 22)$, $Graph(3, 6, 24)$, $Graph(3, 6, 26)$ and $Graph(3, 6, 28)$ by iteratively applying Construction \ref{tec2}. The entire spectrum can be generated using two different starting graphs: the Petersen graph in combination with Construction \ref{tec1}, or the Heawood graph while relying on Construction \ref{tec2}, as described in Section \ref{sec:Comp}. Returning to Figure~\ref{cages}, we observe that subdivision operation on (c) produces $6$ nonisomorphic graphs; one of which is the Heawood graph. This is the unique way to obtain the Heawood graph by subdividing the (red) edges in Figure~\ref{remark}. 

  \begin{figure}[ht]%
		\centering
		\captionsetup[subfloat]{labelformat=empty}
		\subfloat[\centering (c) $(3,5)$-graph of order $12$]
		{{\includegraphics[width=3.5cm]{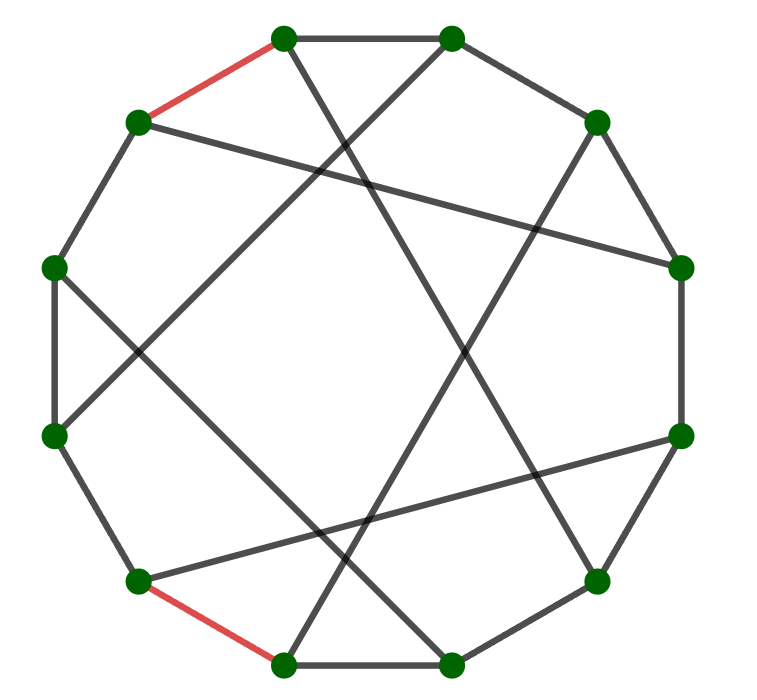} }}
		\captionsetup[subfloat]{labelformat=empty}			
		\subfloat[\centering (e) A subdivision of $(3,5)$-cage]
		{{\includegraphics[width=3.5cm]{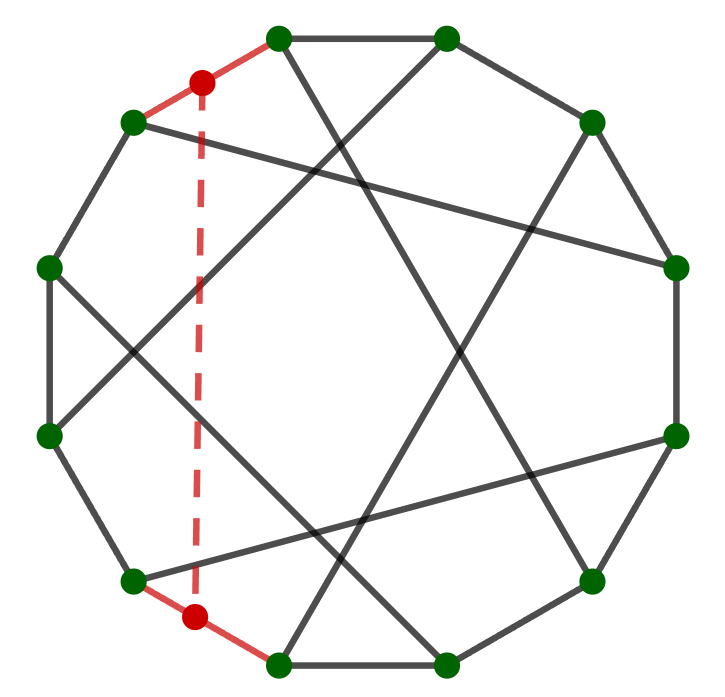} }}
		\caption{A Subdivision of (c) that Yields Heawood Graph }
		\label{remark}
	\end{figure}
    
Note that this is the first spectrum that has not previously appeared explicitly in literature.

  \item{\bf $(3,7)$-Spectrum:}\label{sect34} 
  The $(3,7)$-spectrum consists of all positive even integers greater than or equal to $24$
  \[ \{24, 26, 28, 30, \dots\}, \] 
  beginning with the order of the McGee graph which is also equal to $N(3,7)$, while larger than the Moore bound $M(3,7)$. Using the McGee graph as our starting point, we obtained $Graph(3, 7, 26)$, $Graph(3, 7, 28)$, $Graph(3, 7, 30)$, $Graph(3, 7, 32)$, $Graph(3, 7, 34)$, $Graph(3, 7, 36)$, $Graph(3, 7, 38)$, $Graph(3, 7, 40)$, $Graph(3, 7, 42)$, $Graph(3, 7, 44)$, $Graph(3, 7, 46)$ and $Graph(3, 7, 48)$ by iteratively applying Construction \ref{tec2}. The spectrum is obtained by applying Construction \ref{tec1} to one of the two $(3,7)$-graphs of order 26 found in \cite{housegraph2023} or by using Construction \ref{tec2} described in Section \ref{sec:Comp}.
  
  \item{\bf $(3,8)$-Spectrum:}\label{sect35}
  The $(3,8)$-spectrum consists of $30$ and all positive even integers greater than or equal to $34$ 
  \[ \{30, 34, 36, 38, \dots\}, \]
  starting with the order of the Tutte graph. There is no $(3,8)$-graph of order $32$, creating our first example 
  of a \textit{hole} in the spectrum. The absence of a $(3,8)$-graph of order $32$ was proved by exhaustive computer search \cite{meringer1999fast}. As a result, our techniques cannot generate the spectrum starting from the Tutte $(3,8)$-cage. Due to this, we decided to generate the spectrum beginning with a larger graph of girth $9$ and progressively reduce the orders of the resulting graphs. Specifically, $Graph(3,8,60)$ was obtained from $Graph(3,9,58; 1)$, the first of the $(3,9)$-cages, using Construction \ref{tec2}. Iterative applications of  Construction \ref{tec4} to $Graph(3,8,60)$ lead to the following: $Graph(3,8,58)$, $Graph(3,8,56)$, $Graph(3,8,54)$, $Graph(3,8,52)$, $Graph(3,8,50)$, $Graph(3,8,48;2)$, $Graph(3,8,46;5)$ and $Graph(3,8,44;16)$. The graph $Graph(3,8,48;1)$ was generated from $Graph(3,7,24)$, i.e., the $(3,7)$-cage, via Construction \ref{tec8}. Due to our unsuccessful attempts to find graphs of orders close to $n(3,8)$, the beginning of the spectrum, we decided to conduct an exhaustive iterative search starting from $Graph(3,8,48;1)$ using Construction \ref{tec4}. This process yielded four graphs with parameters $Graph(3,8,46)$, fifteen graphs with parameters $Graph(3,8,44)$, seventeen graphs with parameters $Graph(3,8,42)$, ten graphs with parameters $Graph(3,8,40)$, three graphs with parameters $Graph(3,8,40)$ and one $Graph(3,8,36)$. Additionally, $Graph(3,8,34)$ was obtained from $Graph(3,8,38;1)$ by Construction \ref{tec4},
  The graph $Graph(3,8,42;18)$ is a Cayley graph constructed from the group $\mathbb{S}_3 \times \mathbb{Z}_7$. The graph $Graph(3,8,36;2)$ belongs to the class of Group Divisible Generalized Petersen graphs, denoted as $GDGP_2(18; 5,5)$, described in \Cref{tec6}.

  However, applying Construction \ref{tec1} to a $(3,8)$-graph of order $34$ yields all the required elements of the spectrum. For the first time, $ n(3,8) < N(3,8) = 34 $.

  \item{\bf $(3,9)$-Spectrum:}\label{sect36}
  The $(3,9)$-spectrum consists of all positive even integers greater than or equal to $58$ 
  \[ \{58, 60, 62, 64, \dots\}, \]
  beginning with the order of the $(3,9)$-cages. There are eighteen $(3,9)$-cages, but only one can generate the entire spectrum by applying Construction \ref{tec1}. $N(3,9)$ is the order $n(3,9)$ of the $(3,9)$-cages. Starting with the first $(3,9)$-cage and iteratively applying Construction \ref{tec1}, we obtained the following graphs: $Graph(3, 9, 60)$, $Graph(3, 9, 62)$, $Graph(3, 9, 64)$, $Graph(3, 9, 66)$, $Graph(3, 9, 68)$ and $Graph(3, 9, 70)$. Using Construction \ref{tec1}, the graph $Graph(3,9,74)$ was generated from $Graph(3, 9, 70)$. The graph $Graph(3,9,72)$ was obtained from $Graph(3,9,74)$ through Construction \ref{tec4}. Continuing with $Graph(3,9,74)$, we applied Construction \ref{tec1} to generate the remaining orders of graphs in the spectrum. Additionally, more non-isomorphic graphs with orders from $84$ to $110$ were produced starting from $Graph(3,11,112)$, i.e. the $(3,11)$-cage, using Construction \ref{tec4}: $Graph(3,9,84;2)$, $Graph(3,9,86;2)$, $Graph(3,9,88;2)$, $Graph(3,9,90;2)$, $Graph(3,9,92;2)$, $Graph(3,9,94;2)$, $Graph(3,9,96;2)$, $Graph(3,9,98;2)$, $Graph(3,9,100;2)$, $Graph(3,9,102;2)$, $Graph(3,9,104;2)$, $Graph(3,9,106;2)$, $Graph(3,9,108;2)$ and $Graph(3,9,110;2)$.
  
   \end{description}

  \subsection{Incomplete \texorpdfstring{$(3,g)$}{(3,g)}-Spectra}\label{sect3_1}
 
    \begin{description}
  
       \item{\bf $(3,10)$-Spectrum:}\label{sect37}
  Members of the $(3,10)$-spectrum that we have been able to positively determine include  
  \[ \{70, 72, 80, 82, 84, \dots\} . \] 
  The first $(3,10)$-graph of order $70$ is the cage, whereas the next graph, $Graph(3,10,72)$, belongs to the class of Group Divisible Generalized Petersen graphs, denoted as $GDGP_4(36;7,15,27,19)$ \cite{jasenvcakova2020new, karlubik2018}, described in Section \ref{tec6}. Using the graph of order $80$ found in \cite{potovcnik2013cubic} as a starting graph, we constructed $Graph(3,10,82)$, $Graph(3,10,84)$, $Graph(3,10,86)$, $Graph(3,10,88)$, $Graph(3,10,90)$, $Graph(3,10,92)$, $Graph(3,10,94)$, $Graph(3,10,96)$, $Graph(3,10,98)$ and $Graph(3,10,100)$ through iterative applications of Construction \ref{tec1}. The graph $Graph(3, 10, 102)$ was obtained by Construction \ref{tec5}. Starting from $Graph(3,10, 102)$, we applied Construction \ref{tec1} iteratively to generate graphs with orders from $104$ to $126$.
  The remaining orders of graphs in the spectrum, along with some non-isomorphic graphs, were obtained using $Graph(3,11,144)$ and repeatedly applying Construction \ref{tec4}, we obtained graphs of even orders in the interval $116$
  through $142$.
  Another $(3,10)$-graph of order $80$ can be found in \cite{housegraph2023} and all the following larger graphs can be generated using Construction \ref{tec1} from this graph. It follows that $N(3,10) \leq 80$. 
  
  \emph{The existence or 
  non-existence of $(3,10)$-graphs of orders $ \{74, 76, 78 \} $ is yet to be determined.} The non-existence of at 
  least one of them would give rise to another hole in a cubic spectrum, while the non-existence of at least two of
  these graphs would lead to a novel situation where a hole consists of two consecutive or two non-consecutive values.
  
  \item{\bf $(3,11)$-Spectrum:}\label{sect38} \sloppy
  The orders in the $(3,11)$-spectrum we determined are 
  \[ \{112, 114, 116, 118, 120, 122, 124, 126, 144, \ldots \}. \]
   The $(3,11)$-graph of order $112$ is a cage.
  Starting with the $(3,12)$-cage and applying Construction \ref{tec4} iteratively, we generated: $Graph(3,11,114;1)$, $Graph(3,11,116;1)$, $Graph(3,11,118;1)$, $Graph(3,11,120;1)$, $Graph(3,11,122;1)$ and $Graph(3,11,124;1)$. The $Graph(3,11,126;1)$ was constructed from $Graph(3,11,124;1)$ using Construction \ref{tec1}. Graphs with orders from $114$ to $126$ can also be produced by repeatedly applying Construction \ref{tec1}, starting with the $(3,11)$-cage. Our approach generated two non-isomorphic graphs: $Graph(3,11,114;2)$ and $Graph(3,11,122;2)$. Since further attempts to increase the order of the graphs were unsuccessful, we applied Construction \ref{tec8} to $Graph(3,11,114;1)$ and used Construction \ref{tec4} iteratively to obtain graphs of orders from $198$ to $226$.
 Due to the limited amount of initial graphs, we decided to use cubic vertex-transitive graphs from \cite{potovcnik2013cubic} as starting graphs. Beginning with the graph of order $204$ and girth $12$ \cite{potovcnik2013cubic} and applying Construction \ref{tec4} iteratively, we generated graphs of even orders from
 $182$ to $202$.
 Similarly, starting with the graph of order $182$ and girth $12$ \cite{potovcnik2013cubic} and applying Construction \ref{tec4} repeatedly, we obtained $Graph(3,11,160;1)$, $Graph(3,11,162)$, $Graph(3,11,164)$, $Graph(3,11,166)$, $Graph(3,11,168)$, $Graph(3,11,170)$, $Graph(3,11,172)$, $Graph(3,11,174)$, $Graph(3,11,176)$, $Graph(3,11,178)$ and $Graph(3,11,180)$. We further used both graphs of order $162$ and girth $12$ \cite{potovcnik2013cubic} and iteratively applied Construction \ref{tec4} to generate graphs of orders from $156$ to $160$.
 Using the same starting graphs and applying Construction \ref{tec5}, we obtained $Graph(3,11,148;1)$ and $Graph(3,11,148;2)$. By applying Construction \ref{tec4} to $Graph(3,11,148;1)$, we generated $Graph(3,11,146;1)$. Similarly, by iterative use of Construction \ref{tec4} beginning with $Graph(3,11,148;2)$, we constructed $Graph(3,11,146;2)$ and $Graph(3,11,144;1)$. Starting with $Graph(3,11,148;1)$ and repeatedly applying Construction \ref{tec1}, we obtained: $Graph(3,11,150;1)$, $Graph(3,11,152;1)$, $Graph(3,11,154;3)$, $Graph(3,11,156;3)$, $Graph(3,11,158;3)$ and $Graph(3,11,162;2)$. Likewise, starting with $Graph(3,11,148;2)$ and applying Construction \ref{tec1} iteratively, we generated graphs: $Graph(3,11,150;2)$, $Graph(3,11,152;2)$, $Graph(3,11,154;4)$, $Graph(3,11,156;4)$, $Graph(3,11,158;4)$ and $Graph(3,11,162;3)$. 

 The upper bound on $ N(3,11)$ following from our calculations is the order $144$.
 \emph{The $(3,11)$-graphs of even orders $128$ through $142$ are yet to be found or excluded.} 
  
  \item{\bf $(3,12)$-Spectrum:}\label{sect39} 
  The $(3,12)$-spectrum is not yet fully determined, though some members are known: 
  \[ \{ 126, 162, 168, 180, 182, 192, 204,\ldots \}. \] 
  The smallest element of this spectrum is $126$, the order of the $(3,12)$-cage.
  We constructed a disconnected $(3,12)$-graph of order $252$ (which consists of two disconnected copies of the $(3,12)$-cage) from the $(3,12)$-cage via Construction \ref{tec8}. Starting with this graph, we repeatedly used Construction \ref{tec4} to construct: $Graph(3,12,226)$, $Graph(3,12,228)$, $Graph(3,12,230)$, $Graph(3,12,232)$, $Graph(3,12,234)$, $Graph(3,12,236)$, $Graph(3,12,238)$, $Graph(3,12,240)$, $Graph(3,12,242)$, $Graph(3,12,244)$, $Graph(3,12,246)$, $Graph(3,12,248)$ and $Graph(3,12,250;1)$. Additionally, $Graph(3,12,224;1)$ was generated from the $(3,11)$-cage through Construction \ref{tec8}. By starting with $Graph(3,12,224;1)$ and iteratively applying Construction \ref{tec4}, we obtained the graphs: $Graph(3,12,206)$, $Graph(3,12,208)$, $Graph(3,12,210)$, $Graph(3,12,212)$, $Graph(3,12,214)$, $Graph(3,12,216)$, $Graph(3,12,218)$, $Graph(3,12,220)$ and $Graph(3,12,222)$.
  Four graphs were identified as members of the Group Divisible Generalised Petersen class in \cite{jasenvcakova2020new, karlubik2018}, specifically $Graph(3,12,180)$ as $GDGP_6(90; 11, 41, 29, 77, 47, 71)$, $Graph(3,12,192)$ as $GDGP_3(96;11, 17, 23)$, $Graph(3,12,224; 2)$ as $GDGP_4(112;9,17,65,73)$, and $Graph(3,12,250;2)$ as $GDGP_5(125;11,26,56,106,46)$. Several other graphs with orders $162$, $168$, $182$, $192$, $204$, $216$, $224$, $234$, $240$ and others were found in \cite{potovcnik2013cubic}.  Consequently, $N(3,12) \leq 204$.
  Let us also note that several graphs with the above parameters can be constructed as Group Divisible Generalised Petersen Graphs discussed in Subsection~\ref{tec6}.

 \end{description}
 \begin{table}[ht]
 	\begin{center}
 		\caption{Summary of the results obtained for cubic spectra}\label{taba1}
 		\begin{tabular}{|c|c|c|c|}
 			\hline
 			Girth $g$ & $n(3,g)$ & Orders to be investigated & $N(3,g)$\\
 			\hline
 			3 &  4  &  --  &  4 \\
 			\hline
 			4  &     6   &    -- &  6 \\
 			\hline
 			5  &     10   &    --  &  10 \\
 			\hline
 			6  &   14     &     -- &  14 \\  
 			\hline
 			7  &   24    &   --  & 24 \\
 			\hline
 			8  &    30   &    -- &  34 \\
 			\hline
 			9  &    58  &   --  & 58 \\
 			\hline
 			10  &   70  &   74, 76, 78  & $\leq$ 80 \\
 			\hline
 			11  &   112  &   128 -- 142 & $\leq$ 144 \\
 			\hline
 			12 &    126  &  128 -- 160, 164, 166, 170 -- 178, 184 -- 190, 194 -- 202  & $\leq$ 204 \\
 			\hline
 		\end{tabular}
 	\end{center}
 \end{table}

 \subsection{Completely Determined \texorpdfstring{$(4,g)$}{(4,g)}-Spectra}\label{sect4} 
 \begin{description} 
\item {\bf $(4,3)$-Spectrum:}\label{sect4_3}
  The $(4,3)$-spectrum consists of all positive integers greater than or equal to $5$: 
  \[ \{5, 6, 7, 8, \dots\}, \]   
  starting with the order of the $(4,3)$-cage, the complete graph $K_5$. Starting with the $Graph(4,4,11)$, we repeatedly applied Construction \ref{tec4} to generate the required graphs: $Graph(4,3,6)$, $Graph(4,3,7)$, $Graph(4,3,8)$, $Graph(4,3,9)$ and $Graph(4,3,10)$.
 
  \item {\bf $(4,4)$-Spectrum:}\label{sect40}
  The $(4,4)$-spectrum consists of 8 and all positive integers greater than or equal to 10 
  \[ \{8, 10, 11, 12, \dots\}, \]   
  starting with the order of the $(4,4)$-cage. Notably, there is no $(4,4)$-graph of order 9 \cite{meringer1999fast}, marking the first instance of a \textit{hole} in a $(4,g)$-spectrum. We obtained $(4,4)$-graphs of order 10 and above using the Circulant Construction \ref{circ1} described in Section~\ref{sec:Comp}. These include $Graph(4,4,10)$, $Graph(4,4,11)$, $Graph(4,4,12)$, $Graph(4,4,13)$, $Graph(4,4,14)$, $Graph(4,4,15)$ and $Graph(4,4,16)$.
  All members of the $(4,4)$-spectrum were obtained as circulant graphs described in Subsection \ref{circ1}. As
  mentioned before, $n(4,4) < N(4,4) = 10$, as for $n=9$ the construction yields a graph of girth $3$.
  
  \item {\bf $(4,5)$-Spectrum:}\label{sec:Comp2}
  The $(4,5)$-spectrum consists of all positive integers greater than or equal to 19  
  \[ \{19, 20, 21, \dots\}, \]  
  starting with the order of the $(4,5)$-cage, known as the Robertson graph. Although a $(4,5)$-graph of order $20$ can be found in \cite{housegraph2023}, we also obtained $Graph(4,5,20)$ using Construction \ref{tec2}. The $(4,5)$-graph of order $21$ is the Brinkmann graph, as documented in \cite{housegraph2023}. 
  Starting with $Graph(4,6,39)$, we iteratively applied Construction \ref{tec4} to generate graphs: $Graph(4,5,27;1)$, $Graph(4,5,28)$, $Graph(4,5,29)$, $Graph(4,5,30)$, $Graph(4,5,31)$, $Graph(4,5,32)$, $Graph(4,5,33)$, $Graph(4,5,34)$, $Graph(4,5,35)$, $Graph(4,5,36)$, $Graph(4,5,37)$ and $Graph(4,5,38)$. $Graph(4,5,23)$, $Graph(4,5,24)$, $Graph(4,5,25)$, $Graph(4,5,26)$ and $Graph(4,5,27;2)$ were constructed by Construction \ref{tec4} from the $Graph(4,6,28)$. $Graph(4,5,22)$ was obtained by Construction \ref{tec4} from graph $Graph(4,5,24)$.
   Thus, $N(4,5) = n(4,5)$.
  
\end{description}

 \subsection{Incomplete \texorpdfstring{$(4,g)$}{(4,g)}-Spectra}
\begin{description}
    
  \item {\bf $(4,6)$-Spectrum:}\label{sec:Comp3}
  The known members of the $(4,6)$-spectrum include all positive even integers greater than or equal to $26$ and all positive odd integers greater than or equal to $39$:  
  \[ \{26, 28, 30, 32, 34, 35, 36, 38, 39, 40, 41, \dots\}. \]  
  The $(4,6)$-spectrum begins with $26$, the order of a $(4,6)$-cage. By Corollary \ref{Tatiana} $(i)$, no $(4,6)$-graph exists for order $27$, and exhaustive searches in \cite{meringer1999fast} exclude the existence of $(4,6)$-graphs of orders $27, 29, 31$, and $33$. The $(4,6)$-graph of order $35$ was found in \cite{potovcnik2013cubic, potocnik2015tetravalent}. \emph{The existence of a $(4,6)$-graph of order $37$ is currently undecided.} Nevertheless, the 
  $(4,6)$-spectrum is already remarkably complicated, and contains a number of non-consecutive holes; all of them odd.
  All graphs of even order from $26$ onward were obtained using Circulant Construction \ref{circ1}. 
  This construction is performed independently, without the requirement for a pre-existing graph. 
  
  Odd-order graphs from $41$ onward were constructed from the circulant graphs of even orders greater than or equal to $40$ used as starting graphs for the use of Construction \ref{tec2}.
  $Graph(4,6,39)$ was obtained by Construction \ref{tec4} from $Graph(4,6,41)$. Our results establish the upper
  bound $N(4,6) \leq 38$.

  \item {\bf $(4,7)$-Spectrum:}\label{sec:Comp4}
  The $(4,7)$-spectrum is not fully determined, though some members are known:  
  \[ \{67, 68, 70, 72, 73, 74, 75, \ldots, 80, 84, 85, 86, 87, \dots\}. \]  
  \emph{The existence of $(4,7)$-graphs of orders $69$, $71$, and $81$ to $83$ is still undecided.} The $(4,7)$-spectrum begins with number $67$, the order of the $(4,7)$-cage. We obtained $Graph(4,7,68)$ by applying Construction \ref{tec2} to the $(4,7)$-cage. Using iteratively Construction \ref{tec4} starting from the $(4,8)$-cage of order $80$, we obtained $Graph(4,7,73)$, $Graph(4,7,74)$, $Graph(4,7,75)$, $Graph(4,7,76)$, $Graph(4,7,77)$, $Graph(4,7,78)$ and $Graph(4,7,79)$. From $Graph(4,7,79)$, the one-vertex-larger $Graph(4,7,80)$ was obtained by Construction \ref{tec1}. Similarly, we obtained $Graph(4,7,70)$ by Construction \ref{tec4} from $Graph(4,7,73)$.
  
  We obtained graphs $Graph(4,7,90)$, $Graph(4,7,91)$, $Graph(4,7,92)$, $Graph(4,7,93)$, $Graph(4,7,94)$ and $Graph(4,7,95)$ from the $(4,8)$-graph of order $96$ in \cite{potocnik2015tetravalent}
   by recursively applying Construction \ref{tec4}. Using $Graph(4,7,90)$, we obtained graphs $Graph(4,7,84;1)$, $Graph(4,7,86)$, $Graph(4,7,88)$ by successively applying Construction \ref{tec4}. Additionally, $Graph(4,7,85)$, $Graph(4,7,87)$, $Graph(4,7,89)$ were constructed by Construction \ref{tec1} from graphs $Graph(4,7,84;1)$, $Graph(4,7,86)$, $Graph(4,7,88)$, respectively. Starting with $Graph(4,7,95)$ and iteratively applying Construction \ref{tec1}, we generated $(4,7)$-graphs of orders starting with $96$ and finishing with $134$.
  $Graph(4,7,72)$ and $Graph(4,7,84;2)$ are Cayley graphs. $ N(4,7) \leq 84 $.
  
  
  \item {\bf $(4,8)$-Spectrum:}\label{sec:Comp5}
  The known members of the $(4,8)$-spectrum include  
  \[ \{80, 96, 100, 108, 110, 120, 124, 126, 128, 130, 131, \ldots \}. \]
  The order of the $(4,8)$-cage is $80$. The $(4,8)$-graph of order $81$ does not exist due to Corollary \ref{Tatiana}$(i)$ and the graph of order $82$ does not exist due to Corollary \ref{Tatiana}$(ii)$. The $(4,8)$-graphs of orders $96$, $100$, $110$ and others were identified in the list of edge-transitive and arc-transitive tetravalent graphs in \cite{potocnik2015tetravalent}. The $(4,8)$-graphs of orders $108$, $120$ are Cayley graphs. Starting from the $(4,8)$-graph of order $146$ constructed from $Graph(4,7,73)$ via Construction \ref{tec8} and using recursive application of Construction \ref{tec1}, graphs of orders $147$ through $160$ were obtained.
  The $Graph(4,8,134)$ was obtained by Construction \ref{tec8}. The set of graphs: $Graph(4,8,135)$, $Graph(4,8,136)$, $Graph(4,8,137)$, $Graph(4,8,138)$, $Graph(4,8,139)$, $Graph(4,8,140)$, $Graph(4,8,141)$, $Graph(4,8,142)$, $Graph(4,8,143)$, $Graph(4,8,144)$, $Graph(4,8,145)$ and $Graph(4,8,146)$, was generated by iteratively applying Construction \ref{tec1}. Starting with  $Graph(4,8,134)$ and Construction \ref{tec4}, we constructed $Graph(4,8,124)$, $Graph(4,8,126)$, $Graph(4,8,128)$, $Graph(4,8,130)$ and $Graph(4,8,132)$. By Construction \ref{tec4}, we obtained $Graph(4,8,131)$ and $Graph(4,8,133)$ from $Graph(4,8,130)$ and $Graph(4,8,132)$, respectively.  
\end{description}

 \begin{table}[ht]
 	\begin{center}
 		\caption{Summary of the results obtained for $(4,g)$-spectra}\label{taba2}
 		\begin{tabular}{|c|c|c|c|}
 			\hline
 			Girth $g$ & $n(4,g)$ & Orders to be investigated & $N(4,g)$ \\
 			\hline
 			4 &  8  &  --  &  10  \\
 			\hline
 			5  &     19   &    -- &  19 \\
 			\hline
 			6  &     26   &    37  &  $\leq$ 38\\
 			\hline
 			7  &   67     &    69, 71, 81--83 &  $\leq$ 84\\  
 			\hline
 			8  &   80    &   83--95, 97--99, 101--107, 109, 111--119, 121--123, 125, 127, 129  & $\leq$ 130 \\
 			\hline
 		\end{tabular}
 	\end{center}
 \end{table}

 \subsection{Completely Determined \texorpdfstring{$(5,g)$}{(5,g)}-Spectra}\label{sect5}
 
 \begin{description} 
 
 \item {\bf $(5,3)$-Spectrum}\label{sect51}
 The $(5,3)$-spectrum consists of all positive even integers greater than or equal to $6$ 
 \[ \{6, 8, 10, \dots,\}, \] beginning with $6$, the order of the $(5,3)$-cage $K_6$. $N(5,3)$ is equal to the Moore bound $M(5,3)$. Starting with $Graph(5,4,14)$, we repeatedly applied Construction \ref{tec4} to generate the required graphs $Graph(5,3,8)$, $Graph(5,3,10)$ and $Graph(5,3,12)$.

 \item {\bf $(5,4)$-Spectrum:}\label{sect52}
 The $(5,4)$-spectrum consists of all positive even integers greater than or equal to 10
 \[ \{10, 12, 14, \dots\}, \] starting with order of the $(5,4)$-cage $K_{5,5}$. A $(5,4)$-graph of order $20$ was obtained via Construction \ref{tec8} from the $(5,4)$-cage. Starting from the graph of order $20$, we repeatedly used Construction \ref{tec4} to construct the following graphs: $Graph(5,3,12)$, $Graph(5,3,14)$, $Graph(5,3,16)$ and $Graph(5,3,18)$. $N(5,4)$ is equal to the Moore bound $M(5,4)$.
 

  \item {\bf $(5,6)$-Spectrum:}\label{sect53}
 The $(5,6)$-spectrum consists of the order of the $(5,6)$-cage, $42$, and after skipping $44$, continues with all positive even integers greater than or equal to 46
 \[ \{42, 46, 48, 50, 52, \ldots \}. \] There is no $(5,6)$-graph of order $44$. All orders of the spectrum from $46$ to $84$ can be realized via Cayley graphs. $N(5,6) = 46$.
 
 \end{description}
\subsection{Incomplete \texorpdfstring{$(5,g)$}{(5,g)}-Spectra}\label{sect5_1}
 
 \begin{description} 
 \item {\bf $(5,5)$-Spectrum:}\label{sect54} \sloppy
  The known members of the $(5,5)$-spectrum include  
  \[ \{30, 32, 36, 38, 40, 42, \ldots \}. \]  
 \emph{The only potential member of the $(5,5)$-spectrum not yet decided is $34$}. The $(5,5)$-spectrum starts with the order of the $(5,5)$-cage, which is $30$. $N(5,5) \leq 36$. Observe that the Moore bound $M(5,5)$ is $26$, which is less than $n(5,5)$. The $(5,5)$-graphs $Graph(5,5,36)$, $Graph(5,5,38)$, and $Graph(5,5,40)$ were obtained by recursively applying Construction \ref{tec4} to the $(5,6)$-cage of order $42$. Starting with the $(5,6)$-graph of order $60$ constructed from the $(5,5)$-cage using Construction \ref{tec8}, we generated $Graph(5,5,58)$, $Graph(5,5,56;1)$, $Graph(5,5,54;1)$, $Graph(5,5,52)$, $Graph(5,5,50;1)$, $Graph(5,5,48;1)$, $Graph(5,5,46)$, $Graph(5,5,44)$ and $Graph(5,5,42;1)$.
 Of our list, the following are Cayley graphs: $Graph(5,5,32)$, $Graph(5,5,42;2)$, $Graph(5,5,48;2)$, $Graph(5,5,50;2)$, $Graph(5,5,54;2)$, $Graph(5,5,56;2)$, $Graph(5,5,56;3)$, $Graph(5,5,60;1)$, $Graph(5,5,60;2)$ and $Graph(5,5,60;3)$.
  \end{description}

  \begin{description} 
 \item {\bf $(5,7)$-Spectrum:}\label{sect57}
 The known members of the $(5,7)$-spectrum include  
  \[ \{152, 154, \ldots, 168, 260, 262, \ldots \}. \]  
 The order of the $(5,7)$-cage is $152$. By iteratively applying Construction \ref{tec4} to the $(5,8)$-cage, we generated graphs of even orders from $154$ to $168$. Additionally, applying Construction \ref{tec8} to the $(5,7)$-cage resulted in a $(5,8)$-graph of order $304$. Starting from this graph and repeatedly using Construction \ref{tec4}, we obtained graphs of even orders from $278$ to $302$. The final set of graphs was produced by applying Construction \ref{tec4} to the $(5,8)$-graph of order $288$ \cite{potocnikPentavelent}. The resulting graphs include: $Graph(5,7,260)$, $Graph(5,7,262)$, $Graph(5,7,264)$, $Graph(5,7,266)$, $Graph(5,7,268)$, $Graph(5,7,270)$, $Graph(5,7,272)$, $Graph(5,7,274)$, $Graph(5,7,276)$, $Graph(5,7,278;2)$, $Graph(5,7,280;2)$, $Graph(5,7,282;2)$, $Graph(5,7,284;2)$, and $Graph(5,7,286;2)$.
  \end{description}
 
   \begin{table}[ht]
 	\begin{center}
  \caption{Summary of the results obtained for $(5,g)$-spectra}\label{taba3}
 		\begin{tabular}{|c|c|c|c|}
 			\hline
 			Girth $g$ & $n(5,g)$ & Orders to be investigated & $N(5,g)$ \\
 			\hline
 			3 &  6  &  --  &  6  \\
 			\hline
 			4 &  10  &  --  &  10  \\
 			\hline
 			5  &     30   &    34 & $ \leq$ 36 \\
 			\hline
 			6  &     42   &   --   &  46\\
 			\hline
            7  &     152   &   170--258   &  260\\
 			\hline
 		\end{tabular}
 	\end{center}
 \end{table}
 
 \subsection{Other Completely Determined \texorpdfstring{$(k,g)$}{(k,g)}-Spectra}\label{sec:Comp10}
 
While our primary focus has been on $(3,g)$-, $(4,g)$-, and $(5,g)$-spectra, we have also determined some other $(k,g)$-spectra including the $(6,3)$-, $(6,4)$-, $(7,3)$, and $(7,4)$-spectrum.

\begin{description}
    \item {\bf $(6,3)$-Spectrum:} \label{sect71}
    The $(6,3)$-spectrum is the set of all positive integers greater than or equal to $7$    
    \[ \{7, 8, 9, \ldots \},\] beginning with the order $7$ of $K_7$. Here, $N(6,3) = 7$, which is equal to the Moore bound $M(6,3)$. We obtained $Graph(6,3,8)$, $Graph(6,3,9)$, $Graph(6,3,10)$, $Graph(6,3,11)$, $Graph(6,3,12)$, $Graph(6,3,13)$ and $Graph(6,3,14)$ by iteratively applying Construction \ref{tec4} starting from $Graph(6,4,16)$.

\item {\bf $(6,4)$-Spectrum:}\label{sect72}
The $(6,4)$-spectrum consists of 12 and all positive integers greater than or equal to 14
   \[ \{12, 14, 15, 16, \ldots \}.\] The first element of the spectrum is the order of the $(6,4)$-cage $K_{6,6}$.
   Notably, there is no $(6,4)$-graph of order $13$ \cite{meringer1999fast}, creating a hole in the spectrum. Thus, the $N(6,4) = 14$. All members of the spectrum greater than or equal to $14$ were obtained by exhaustive computer search \cite{meringer1999fast}.
   Graphs of even order can also be constructed from $(6,3)$-graphs by Construction \ref{tec8}. $Graph(6,4,25)$, $Graph(6,4,23)$, $Graph(6,4,21)$ were obtained by iteratively applying Construction \ref{tec4} to the $(6,4)$-graph of order $27$ found in \cite{housegraph2023}.
   Starting with a $(6,4)$-graph of order $19$ found in \cite{housegraph2023} and using Construction \ref{tec4}, we obtained $Graph(6,4,17)$. Other members of the spectrum can be also found in \cite{housegraph2023}.

\item {\bf $(7,3)$-Spectrum:}\label{sect73} 
The $(7,3)$-spectrum is the set of all positive even integers greater than or equal to $8$
   \[ \{8, 10, 12, \ldots \}.\] 
   The first element, $8$, is the order of the complete graph $K_8$ on $8$ vertices. The value $N(7,3) = 8$. We 
   constructed $Graph(7,4,16)$ from the $(7,3)$-cage by Construction \ref{tec8}. Graphs of orders $12$ and $14$ were generated from $Graph(7,4,16)$ by iteratively applying Construction \ref{tec4}. A $(7,3)$-graph of order $10$ can be found in \cite{housegraph2023}.
   
\item {\bf $(7,4)$-Spectrum:}\label{sect74} 
The $(7,4)$-spectrum consists of all positive even integers greater than or equal to $14$
     \[\{14, 16, 18, \ldots \}. \]
  It begins with $14$, which is the order of the $(7,4)$-cage $K_{7,7}$. The value of $N(7,4)$ is $14$. 
  To complete the spectrum, we obtained a $(7,4)$-graph of order $28$ from the $(7,4)$-cage by Construction \ref{tec8}. Graphs of orders $16$ to $26$ were generated from this disconnected graph by iteratively applying Construction \ref{tec4}.

\end{description}

  \begin{table}[ht]
 	\begin{center}
 		\caption{Summary of the results obtained for $k=6$}\label{taba4}
 		\begin{tabular}{|c|c|c|c|}
 			\hline
 			Girth $g$ & $n(6,g)$ & Orders to be investigated & $N(6,g)$ \\
 			\hline
 			3 &  7  &  --  &  7  \\
 			\hline
 			4 &  12  &  --  &  14  \\
 			\hline  		
 		\end{tabular}
 	\end{center}
 \end{table}
 
 \begin{table}[ht]
 	\begin{center}
 		\caption{Summary of the results obtained for $k=7$}\label{taba5}
 		\begin{tabular}{|c|c|c|c|}
 			\hline
 			Girth $g$ & $n(7,g)$ & Orders to be investigated & $N(7,g)$ \\
 			\hline
 			3 &  8  &  --  &  8  \\
 			\hline
 			4 &  14  &  --  &  14  \\
 			\hline
 		\end{tabular}
 	\end{center}
 \end{table}

 \section{Computational Techniques}\label{sec:Comp}
 Having surveyed our results which constitute the current state of the art regarding $(k,g)$-spectra, we now focus on computational techniques enabling us to acquire the information provided in the previous section. We outline the various algorithms and methods we have developed and employed. These techniques range from edge and vertex manipulations to arithmetic and algebra based constructions involving or mimicking circulants and Cayley graphs. Our methods are algorithmically relatively straightforward and did not require careful optimization. Each method has its strengths and limitations, and despite their simplicity, it was their combined application that allowed us to make progress in mapping out previously unknown $(k,g)$-spectra. One of the reasons behind the effectiveness of our methods probably also lies in the intuitive 
 observation that the number of $(k,g)$-graphs grows quickly with the increase of the considered order, and therefore
 finding $(k,g)$-graphs whose order is close to $2n(k,g)$ is not particularly demanding. On the other hand, graphs
 of order close to $n(k,g)$, which are most likely more scarce, can often be constructed from the corresponding
 cage.
 Some of our techniques yield the entire $(k,g)$-spectrum for specific values of $k$ and $g$ alone, and when a single technique was insufficient, we combined two or more techniques to achieve the desired results. 
 
  \subsection{Edge Subdivision Constructions}\label{tec1}
  There are various kinds of edge subdivision in the literature \cite{brinkmann2017generation, jajcay2021spectra}. Here, we discuss only two-edge subdivision and triple-edge subdivision.
 
 \begin{dfn}
 	 Let $\Gamma$ be a $(k,g)$-graph of order $n$ and let $e_1$ and $e_2$ be two edges of $\Gamma$. Suppose $e_1=v_1v_2$ and $e_2=w_1w_2$. We define the edge distance $E(e_1,e_2)$ between $e_1$ and $e_2$ as follows:
  
 	\begin{equation}
 		E(e_1,e_2) = min\{d(v_1,w_1), d(v_1,w_2), d(v_2,w_1), d(v_2,w_2)\} + 1,
 	\end{equation}
 	where $d(v,w)$ denotes the usual distance metric between vertices
    $$d(v,w) = min\{length(p):p~ is~ a ~path ~from~ u ~to~ w\}.$$
 	
 	 \end{dfn}
 
 \begin{tech}\label{tech1}
 	Let $\Gamma$ be a cubic $(3,g)$-graph of order $n$. Let $e_1$ and $e_2$ be two edges of $\Gamma$ such that the edge distance $E(e_1,e_2)$ is at least $g-2$. Introduce a vertex (subdivide) on each of the edges $e_1$ and $e_2$ and join the two new vertices with an edge to obtain a new cubic graph $\Gamma'$ with two more vertices and one more
    edge.
 \end{tech}
 
   \begin{tech}\label{tech2}
  	Let $\Gamma$ be a cubic $(k,g)$-graph of order $n$. Let $e_1$, $e_2$, and $e_3$ be three edges such that the pairwise edge distance between  $e_1$, $e_2$, and $e_3$ is at least $g-3$. Introduce a vertex on (subdivide) each of the edges $e_1$, $e_2$, and $e_3$ and join the three introduced vertices via another new vertex to obtain a new graph $\Gamma''$.
  \end{tech}

 	It is easy to see that if $\Gamma$ is a cubic graph of order $n$ and girth $g$, $\Gamma'$ obtained using Technique~\ref{tech1} is a cubic graph of order $n+2$ and girth greater than or equal to $g$. Graph $ \Gamma''$ obtained from $\Gamma$ using Technique~\ref{tech2} is also cubic, of order $n+4$, and girth greater than or equal to $g$. These techniques alone were sufficient for generating the $(3,5)- (3,9)$-spectra (which only consist of even
    orders).

    The next technique allows for adding a single vertex to tetravalent graphs; which can also have odd orders.

\begin{tech}\label{tech-tetra}
  	Let $\Gamma$ be a tetravalent $(k,g)$-graph of order $n$. Let $e_1$ and $e_2$ be two edges of $\Gamma$
    whose edge distance $E(e_1,e_2)$ is at least $g-2$. Introduce the same vertex on each of the edges $e_1$ and $e_2$ (subdivide each edge and identify the two new vertices) to obtain a new graph $\Gamma'$ of order $n+1$.
  \end{tech}

  Not surprisingly, these techniques have been previously used by many authors including \cite{jajcay2021spectra}. 
  They, however, did not gain much of a recognition, as they increase the order of the resulting graph which is, in the 
  original context of the Cage Problem, undesirable. In the context of our paper, it is interesting to ask \textit{"When are these techniques guaranteed to yield the entire $(k,g)$-spectrum (possibly starting from a single
  graph)?"} or  similarly,\textit{"What is the bound $N'(k,g)$ such that every element of the $(k,g)$-spectrum greater than $N'(k,g)$ can be obtained by these techniques?"}. Since sufficiently large graphs necessarily contain edges that
  are far apart, the second question, in particular, appears meaningful.

  \subsection{Edge Deletion and Vertex Addition Constructions}\label{tec2}
  The algorithms described in this and the following subsection are quite simple and we include
  their description for completeness. Since the graphs we considered were relatively small
  (several hundred vertices; at most), these algorithms were fast enough without a need for detailed optimization.
  
   We used two kinds of edge deletions, namely three-edge deletion and two-edge deletion. These two techniques exhibit similar algorithms. However, in three-edge deletion we delete three edges and add two vertices whereas in two-edge deletion we only delete two edges and add a vertex. We therefore use three-edge deletion to generate $(3,g)$-spectra and two-edge deletion to generate $(4,g)$-spectra (again, for the reasons of parity). We describe both techniques in the following algorithm description.

Algorithm \ref{DeleteEdgesAddVertices} takes four parameters for its input. The first parameter is a \lstinline{graph} object from \emph{GRAPE} package \cite{GRAPE} that we wish to modify (most of the
time a $(k,g)$-graph). The second parameter \lstinline{numberOfEdges} is the number of edges we intend to extract from the \lstinline{graph} object (three or two). The parameter \lstinline{numberOfVertices} is the number of vertices that need to be added to the original graph (two or one, respectively). The last parameter is the desired girth of the resulting graph.

First, in a for-cycle, the algorithm runs through all possible sets of edges of size \lstinline{numberOfEdges} in the input \lstinline{graph}. For each such combination of edges, we delete the edges from the \lstinline{graph} object with the function \lstinline{DeleteEdges()} and add new vertices according to the \lstinline{numberOfVertices} with the function \lstinline{AddVertices()}. At this moment, the algorithm constructed an irregular graph with isolated vertices and vertices of smaller degree than the degree $k$ of the original graph. With the function \lstinline{possibleEdges()}, we obtain the set of all edges that complete the graph into a regular graph and which do not cause a cycle of size smaller than \lstinline{girth}. The last function \lstinline{graphsByCombination()} goes through all subsets of \lstinline{possibleEdges} of required size and returns a list of graphs of the prescribed \lstinline{girth}. The for-loop stops when a graph with the desired parameters is found.

\begin{algorithm}[h] 
\caption{Obtaining graph with given girth by deleting edges and adding vertices}
\label{DeleteEdgesAddVertices}
\begin{algorithmic} 
\STATE \textbf{DeleteEdgesAddVertices(graph, numberOfEdges, numberOfVertices, girth)}

\FORALL{combinations of edges of size numberOfEdges}
	\STATE graph = DeleteEdges(graph, combinations of edges)
	\STATE graph = AddVertices(graph, numberOfVertices)
	\STATE possibleEdges := possibleEdges(graph, degree, girth)
	\STATE possibleGraphs := graphsByCombination(graph, possibleEdges, girth)
	\IF{possibleGraphs is not empty}
			\RETURN possibleGraphs
	\ENDIF
\ENDFOR
\end{algorithmic}
\end{algorithm}

   \subsection{Vertex Deletion Constructions}\label{tec4}
 Here, a $(k,g)$-graph of larger order is used as a starting graph, a few vertices are deleted and the remaining vertices are reconnected to form a new graph of smaller order. We used the following vertex deletions: one-vertex deletion, two-vertex deletion, three-vertex deletion, and four-vertex deletion. Our algorithms discard resulting graphs which are of shorter girths and/or of smaller degrees. 

Algorithm \ref{DeleteVertices} is of a very similar flavor to Algorithm \ref{DeleteEdgesAddVertices}. The difference between the two algorithms is that in the second algorithm no new vertices are added and vertices instead of edges are deleted. For its input, it takes three parameters. The parameters \lstinline{graph} and \lstinline{girth} are the same as in Algorithm \ref{DeleteEdgesAddVertices}. The parameter \lstinline{numberOfVertices} stands for the number of vertices we wish to remove from the input graph.

The for-loop goes through combinations of vertices of the given size \lstinline{numberOfVertices}. With each combination, it deletes the vertices with their adjacent edges using the function \lstinline{DeleteVertices()}. Afterwards, the algorithm continues in the same way as in Algorithm \ref{DeleteEdgesAddVertices}. The algorithm stops when it finds a graph of the desired degree and girth.

\begin{algorithm}[h] 
\caption{Obtaining graph with given girth by deleting vertices}
\label{DeleteVertices}
\begin{algorithmic} 
\STATE \textbf{DeleteVertices(graph, numberOfVertices, girth)}

\FORALL{combinations of vertices of size numberOfVertices}
	\STATE graph = DeleteVertices(graph, combinations of vertices)
	\STATE possibleEdges := possibleEdges(graph, degree, girth)
	\STATE possibleGraphs := graphsByCombination(graph, possibleEdges, girth)
	\IF{possibleGraphs is not empty}
			\RETURN possibleGraphs;
	\ENDIF
\ENDFOR
\end{algorithmic}
\end{algorithm}
 
  \subsection{Biggs's Tree Deletion Construction} \label{tec5}
  This technique is an implementation of Theorem~\ref{n5}. It is an extension of vertex deletion \ref{tec4}, but deletes an induced tree instead of deleting a random vertex or unrelated vertices. 
  The formula for determining the maximum order of a tree to be deleted is expressed in Theorem \ref{Biggs_tree_theorem}.  This technique has been used in generating graphs $Graph(3, 10, 102)$, and  $Graph(3,11,148;1)$.
 
  \begin{theorem}[\cite{biggs1998constructions}] \label{Biggs_tree_theorem}
 	Let $\Gamma$ be a cubic graph of girth $g \geq 4$ of order $n$ and let $r = \lfloor \frac{g}{4} \rfloor$. Then there exists a cubic graph $\Gamma'$ of order $n-\epsilon$ and girth $g-1$, where
 	\[ \epsilon =
 	\begin{cases} 
 		2^{r+1}-2 & \text{if } g \equiv 0,1 \pmod{4}  \\
 		3 \cdot 2^r-2 & \text{if } g \equiv 2,3 \pmod{4}
 	\end{cases}
 	\]
 \end{theorem}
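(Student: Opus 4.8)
The plan is to obtain $\Gamma'$ by excising a Moore-type tree from $\Gamma$ and then reconnecting the vertices that lose an edge in pairs of ``siblings''; the one genuinely delicate choice is whether the excised tree is rooted at a vertex or at an edge, and this is forced by $g\bmod 4$. Put $r=\lfloor g/4\rfloor$, so $r\ge 1$. If $g\equiv 2,3\pmod 4$, pick any vertex $u$ and let $T$ be the ball of radius $r$ about $u$, which is the induced Moore tree $\mathcal{T}_{3,r}$ with $|V(T)|=3\cdot 2^{r}-2=\epsilon$. If $g\equiv 0,1\pmod 4$, pick any edge $e=xy$ and let $T$ be the ball of radius $r-1$ about $\{x,y\}$, an induced ``double tree'' with $|V(T)|=2^{r+1}-2=\epsilon$. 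The radius of the excised tree is deliberately chosen so that twice it has the parity that makes the final girth estimate close.

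Having fixed $T$, I would delete it and set $\Gamma^{*}=\Gamma-T$. Since $T$ is a ball and $g\ge 4r$, a short distance computation shows that a vertex of $\Gamma^{*}$ adjacent to $T$ --- a \emph{boundary} vertex, i.e.\ one step past a leaf of $T$ --- has exactly one neighbour in $T$ and hence degree $2$ in $\Gamma^{*}$, while every other vertex keeps degree $3$; moreover the boundary vertices split into pairs of siblings sharing a common neighbour (a leaf of $T$), and their number is even. Adding an edge between the two vertices of each sibling pair (they are non-adjacent in $\Gamma$, else a triangle) yields a cubic graph $\Gamma'$ with $|V(\Gamma')|=n-|V(T)|=n-\epsilon$. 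All of this, together with checking that the two values of $|V(T)|$ agree with the stated $\epsilon$, is routine.

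The heart of the matter is $\operatorname{girth}(\Gamma')\ge g-1$. Let $C$ be a shortest cycle of $\Gamma'$ and let $m$ be the number of new edges it uses. If $m=0$ then $C\subseteq\Gamma^{*}\subseteq\Gamma$, so $|C|\ge g$. If $m=1$, replacing the single new edge $w_1w_2$ (at a leaf $\ell$) by the path $w_1\ell w_2$ turns $C$ into a cycle of $\Gamma$, so $|C|+1\ge g$. If $m\ge 2$, write $C$ as an alternation of new edges $f_1,\dots,f_m$ --- sitting at pairwise distinct leaves, since each boundary vertex lies on a unique new edge --- and arcs $A_1,\dots,A_m\subseteq\Gamma^{*}$. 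For each $A_i$ I close it up along shortest routes through $T$: from its endpoint to that leaf, within $T$ to the root and on to the other leaf, then out to the other endpoint. Because the inserted tree part stays within the tree radius of the root while the interior of $A_i$ lies strictly farther out, this is a genuine simple cycle of $\Gamma$, of length at most $|A_i|+2r+2$ in the vertex case and $|A_i|+2r+1$ in the edge case. Hence $|A_i|\ge g-2r-2$, resp.\ $g-2r-1$, and summing, $|C|=m+\sum_i|A_i|\ge m(g-2r-1)$, resp.\ $m(g-2r)$. With $m\ge 2$ together with $g\ge 4r+2$ (vertex case), resp.\ $g\ge 4r$ (edge case), this gives $|C|\ge 2g-4r-2\ge g-1$, resp.\ $|C|\ge 2g-4r\ge g-1$, as required.

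The main obstacle is exactly this girth estimate, and with it the observation that no single ``excise one Moore tree'' recipe works for all residues: a vertex-rooted excision when $g\equiv 0\pmod 4$ would only guarantee girth $\ge g-2$, forcing the edge-rooted variant --- which trims the tree radius by one, hence the second formula for $\epsilon$ --- precisely so the $m\ge 2$ inequality closes. By comparison, verifying that $T$ is an induced tree, that exactly the boundary vertices drop to degree $2$, and that their number is even are straightforward from $g\ge 4r$. Strictly, the construction gives girth \emph{at least} $g-1$; equality can be arranged for the cages and symmetric graphs we apply it to (for instance by placing $u$, resp.\ $e$, on a $g$-cycle of $\Gamma$ that meets $T$ in a single leaf, so that the new edge there short-circuits that cycle to length $g-1$).
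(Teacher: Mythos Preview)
The paper does not actually prove this theorem: it is quoted verbatim from Biggs \cite{biggs1998constructions} and used only as the basis for Algorithm~\ref{RemoveBiggsTree}. The nearest thing to a proof in the paper is that of Theorem~\ref{n5}, which is a \emph{different} statement---two copies of $\Gamma$ with a Moore tree excised from each and the boundary vertices matched between copies via a tree isomorphism---yielding girth $\ge g$ rather than $g-1$, and order $2(n-|V(\mathcal{T}_{k,r})|)$ rather than $n-\epsilon$. So there is no ``paper's own proof'' to compare against here.

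That said, your reconstruction is essentially the standard Biggs argument and is correct as far as the inequality $\operatorname{girth}(\Gamma')\ge g-1$ goes. The crucial observation you identify---that the excised tree must be edge-rooted when $g\equiv 0,1\pmod 4$ and vertex-rooted when $g\equiv 2,3\pmod 4$, precisely so that the $m\ge 2$ estimate closes---is exactly the point of the two cases in the formula for $\epsilon$, and your arc-by-arc closure through $T$ is the right way to bound $|A_i|$. The verifications that $T$ is induced, that boundary vertices have a unique neighbour in $T$, and that siblings are non-adjacent all go through as you sketch, since the relevant forbidden cycles would have length at most $2r+2<g$.

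The one soft spot is your final sentence on achieving girth \emph{exactly} $g-1$. The statement as written asserts equality, and your parenthetical (``placing $u$, resp.\ $e$, on a $g$-cycle of $\Gamma$ that meets $T$ in a single leaf'') presupposes the existence of such a $g$-cycle; this is not automatic for an arbitrary cubic graph of girth $g$ and an arbitrarily placed tree. In practice (and in Biggs's original) one chooses the root so that some $g$-cycle through it enters and leaves via the two non-tree edges of a single leaf, but you have not argued that this is always possible. For the applications in this paper that looseness is harmless---the algorithm simply checks the girth of the output---but as a proof of the theorem as stated it is a genuine, if small, gap.
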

 
Algorithm \ref{RemoveBiggsTree} based on the above theorem removes a subtree with a specified number of vertices and then reconnects the vertices adjacent to the subtree. It yields a new graph of girth smaller by one than the girth of the original graph. It takes the \lstinline{graph} in the \emph{GRAPE} package format \cite{GRAPE} as an input parameter.
In its first step, it computes the size of the excised subtree using the function \lstinline{BiggsTreeSize()} based on the girth of the given graph. Then, it inspects all vertices in the given graph. For each vertex $v$, it iteratively obtains a subtree of the computed size \lstinline{excisionSize} rooted at $v$.
After excising the subtree, it reconnects the `deficient' vertices and checks the resulting degree and girth in the function \lstinline{RemoveTree}. If the function \lstinline{RemoveTree} returns a graph of desired parameters, the algorithm stops.

\begin{algorithm}[h] 
\caption{Obtaining graph by removing the Biggs tree}
\label{RemoveBiggsTree}
\begin{algorithmic} 
\STATE \textbf{RemoveBiggsTree(graph)}
	\STATE excisionSize := BiggsTreeSize(girth);
	\FORALL{vertex in Vertices(graph)}
        \FORALL{subtree in IteratorOfSubtrees(graph, excisionSize)}
				\STATE newGraph := RemoveTree(graph, girth, subtree);
				\IF{newGraph is not empty}
					\RETURN newGraph;
                \ENDIF
		\ENDFOR
	\ENDFOR
\end{algorithmic}
\end{algorithm}

\subsection{Perfect Matching Deletion Construction}
All bipartite regular graphs contain a perfect matching. In general, removing a perfect matching from a $(k,g)$-graph results in a $(k-1,g')$-graph with $g' \geq g$; where $g'=g$ in case at least one of the original girth-cycles
is not affected by the perfect matching removal.
A particular example of this method involves deleting the edges of a perfect matching from a complete bipartite graph $K_{k,k}$ of degree $k$ and girth $4$. The resulting graph remains bipartite, with each vertex having degree $k-1$. 
In the case $k=3$, the girth $g'$ of the resulting graph increases to $6$, while the girth of the graph when $ k \geq 4 $  remains unchanged, as no new edges and therefore no shorter cycles were introduced, and at least one of the
original $4$-cycles was preserved. Using this approach, one can construct $(k-1)$-regular graphs of girth $4$ and order $2k=n(k-1,4)+2$, for all $ k \geq 4 $. Moreover, since the resulting graph is still regular and bipartite, this
method can be repeated, resulting in bipartite graphs of order $2k$ and degrees smaller than $k-1$. However, repeated application may result in increased girth and/or disconnected graphs.

 \subsection{Circulant Graph Construction and Its Variation}\label{circ1}
 A \emph{circulant graph} $G(n,S)$ is an arithmetically defined graph on $n$ vertices $u_0,u_1,\ldots,u_{n-1}$. Its adjacency is defined via a ($ \mathbb{ Z}_n $-generating) connecting set $S \subseteq \{0,1, 2,..., n-1\}$ closed under taking inverses, $ S = -S 
 \pmod{n}$, with two vertices $i$ and $j$ in $G(n,S)$ adjacent if and only if $i - j \pmod{n}$ belongs to $S$. We used the circulant graph construction to obtain
 elements of the $(4,4)$-spectrum, and altered the circulant graph construction a bit
 for the $(4,6)$-spectrum.
\smallskip
 
 \textbf{$(4,4)$-graphs:} To obtain tetrahedral graphs of orders $ n \geq 10 $, let $ S = \{ 1,n-1,3, n-3 \} $, i.e., for every vertex $u_i$,
 $i \in Z_n$, $u_i$ is adjacent to $u_{i+1}$, $u_{i-1}$, $u_{i+3}$, and $u_{i-3}$, with the indices calculated $ \pmod{n}$. This choice of the connecting sets yields $(4,4)$-graphs of orders $10, 11,12, \dots$.
\smallskip

 \textbf{$(4,6)$-graphs:} To obtain tetrahedral graphs of even orders and larger girth, we define the adjacency based on the parity of $i$:
 \begin{itemize}
 	\item $i$ is odd: $u_i$ is adjacent to $u_{i+1}$, $u_{i-1}$, $u_{i+7}$, and $u_{i+11}$,
 	\item $i$ is even: $u_i$ is adjacent to $u_{i+1}$, $u_{i-1}$, $u_{i-7}$, and $u_{i-11}$,
 \end{itemize}
with the indices calculated modulo $n$, $ n \geq 26 $ and even.
Even though the resulting graphs of girth $6$ are not necessarily circulants, the altered construction yielded $(4,6)$-graphs of all required even orders. The way to obtain the required odd orders has already been described in Section~\ref{sec:Comp3}.

\subsection{Group Divisible Generalized Petersen Graphs}\label{tec6}
Let $n\geq 3$ and $m\geq 2$ be positive integers such that $m$ divides $n$, let $a$ be a non-zero element of $\mathbb{Z}_m$, and let $K=(k_{0}, k_{1}, \dots, k_{m-1})$ be a sequence of elements from $\mathbb{Z}_n$ all of which are congruent to $a \pmod{m}$ and satisfy the requirement $k_j + k_{j-a} \not \equiv 0 \pmod n$, for all $j \in \mathbb{Z}_m$. \emph{ A Group Divisible Generalized Petersen graph $GDGP_m(n: k_{0}, k_{1}, \dots, k_{m-1})$} is a cubic graph with vertex set of order $2n$ consisting of the vertices $\{u_i, v_i \; | \; i \in \mathbb{Z}_n\}$, having
the edge set of size $3n$: $\{u_i,u_{i+1}\}$, $\{u_i,v_i\}$ and $\{v_{mi + j}, v_{mi + j + k_{j}}\}$, where $i \in \mathbb{Z}_{\frac{n}{m}}, j \in \mathbb{Z}_m$, where the arithmetic operations are performed $\pmod{n}$  \cite{jasenvcakova2020new}. The parameters of the cubic graphs of girths $8$, $10$, and $12$ constructed as GDGP-graphs are presented in
the respective subsections of the previous section.

\subsection{Canonical Double Cover Construction}\label{tec8}
 The following is a well-known recursive construction that preserves the degree of the original graph, and in the 
 special case of the canonical double cover, it doubles
 its order and its cycles of odd lengths. This means that it produces a $(k,g')$-graph from a $(k,g)$-graph of odd girth $g$, with $g'>g$ and even. For the sake of completeness, we present the construction, but only for the case when 
 the starting graph is already a $(k,g)$-graph. 
 
 Let $\Gamma$ be a finite $(k,g)$-graph, and let $D(\Gamma)$ denote the set of \textit{darts} of $\Gamma$, obtained by replacing each edge $e$ of $\Gamma$ with a pair of opposing darts (or arcs) $e$ and $e^{-1}$. A mapping $\alpha: D(\Gamma) \rightarrow G$ is called a \textit{voltage assignment} on $\Gamma$ if it satisfies the condition $\alpha(e^{-1}) = (\alpha(e))^{-1}$ for all $e \in D(\Gamma)$, where $G$ is a group referred to as the \textit{voltage group}. The \textit{voltage graph} (also known as the \textit{derived graph} or the \textit{lift}) of $\Gamma$ with respect to $\alpha$, denoted by $\Gamma^{\alpha}$, is a new graph with vertex set $V(\Gamma^{\alpha}) = V(\Gamma) \times G$ and edge set $E(\Gamma^{\alpha})$, where vertices $u_a$ and $v_b$ are adjacent in $\Gamma^{\alpha}$ if $e = (u,v) \in D(\Gamma)$ and $b = a \alpha(e)$.
\sloppy
A voltage graph $\Gamma^{\alpha}$ is a \textit{canonical double cover} of $\Gamma$ if the voltage group is $\mathbb{Z}_2$ and each edge of $\Gamma$ is assigned the non-zero voltage $1 \in \mathbb{Z}_2$. The canonical double cover construction is a particular type of voltage graph construction, which has been extensively studied by various authors (of the large number of articles considering the canonical double cover, consult, for example, \cite{eze2022recursive, gross2001topological}). In our work, we applied the canonical double cover construction to obtain the following graphs: $Graph(3,8,48;1)$, $Graph(3,12,224;1)$, $Graph(3,12,228)$, $Graph(4,8,134)$, $Graph(4,8,146)$, $Graph(5,6,60)$,  $Graph(6,4,14)$, $Graph(6,4,16)$, $Graph(6,4,18)$, $Graph(6,4,20)$, $Graph(6,4,22)$, $Graph(6,4,24)$, and $Graph(7,4,16)$.

As is well-known, a canonical double cover of a bipartite graph consists of two disconnected copies of the original graph. Thus, using the canonical double cover construction systematically, we also obtained the following graphs, which are all disconnected but were used as starting graphs with respect to erasing vertices and adding edges as described in Subsection~\ref{tec4}:
$(3,12)$-graph of order $252$,  $(5,4)$-graph of order $20$,
$(5,8)$-graph of order $304$, and $(7,4)$-graph of order $28$ (which consists of two disconnected copies of $K_{7,7}$).

\section{Concluding Remarks} \label{sec:Concl}
As already stated in the introduction, the aim of the research and results presented in our paper is to gain 
insights into the structure of $(k,g)$-graphs; with the graphs of small orders being of particular 
interest. We conclude by outlining possible directions for future research or applications.

Of the several open questions highlighted in \cite{exoo2012dynamic}, there is one that is attributed to several authors and which is based
on the simple observation that all known cages as well as record graphs of even girth happen to be bipartite.
This lead to the repeatedly stated conjecture that all even-girth cages must be bipartite. Whether the conjecture holds
true or not, it still leads to the following open problem we find both interesting and related to $(k,g)$-spectra:

\begin{center} \emph{For any given $k \geq 3 $ and \emph{even} $g \geq 4$, determine the smallest order $n$ such that 
there exists a $(k,g)$-graph of order $n$ which is not bipartite.}
    \end{center}

Our findings not only expand the current state of affairs with regard to the spectra of orders of $(k,g)$-graphs, but also highlight the challenges in determining complete spectra for larger values of $k$ and $g$. The gaps in our results, particularly for higher girths, underscore the complexity of this problem and suggest directions for future research in
finding new methods for constructing $(k,g)$-graphs.
The computational methods presented in our paper offer a practical approach to exploring $(k,g)$-spectra, complementing theoretical techniques, but should be viewed as just an introduction into the computational aspects of the considered
problems. The new found methods may not only bring advancement with regard to spectra of orders, but might prove
applicable to other related problems in Extremal Graph Theory.

Computational techniques and extensive use of computers constitute an inherent part of the approaches to the 
Cage Problem. The same is true for the determination of spectra of orders, and researchers who have employed
computational approaches to the Cage Problem certainly have developed a box of tools that should be easily 
adaptable to problems considered in here. In this context, we wish to point out one particular recent paper \cite{rui-big15} in which the authors employed methods similar to ours in that they
remove a tree from a graph of girth $8$ and subsequently reconnect the degree-deficient vertices
using integer programming solvers to obtain graphs of girth $7$. We see this approach as a much
more sophisticated version of those presented in our paper suitable for use with much larger graphs.

\nocite{*}
\bibliographystyle{plain}
\bibliography{main}

\begin{thebibliography}{10}

\bibitem{biggs1998constructions}
N.~L. Biggs.
\newblock Constructions for cubic graphs with large girth.
\newblock {\em The Electronic Journal of Combinatorics}, 5(1):A1, 25, 1998.

\bibitem{Big&Ito}
N.~L. Biggs and T.~Ito.
\newblock Graphs with even girth and small excess.
\newblock {\em Mathematical Proceedings of the Cambridge Philosophical Society}, 88:1--10, 1980.

\bibitem{brinkmann2017generation}
G.~Brinkmann and J.~Goedgebeur.
\newblock Generation of cubic graphs and snarks with large girth.
\newblock {\em Journal of Graph Theory}, 86(2):255--272, 2017.

\bibitem{housegraph2023}
K.~Coolsaet, S.~D’hondt, and J.~Goedgebeur.
\newblock House of graphs 2.0: A database of interesting graphs and more.
\newblock {\em Discrete Applied Mathematics}, 325:97--107, 2023.

\bibitem{rui-big15}
F.J.C.T. De~Ruiter and N.~L. Biggs.
\newblock Applications of integer programming methods to cages.
\newblock {\em The Electronic Journal of Combinatorics}, 22(4):p4.35, 8, 2015.

\bibitem{erdos1963exist}
P.~Erd\H{o}s and H.~Sachs.
\newblock Regul{\"a}re graphen gegebener taillenweite mit minimaler knotenzahl.
\newblock {\em Wiss. Z. Martin-Luther-Univ. Halle-Wittenberg Math.-Natur. Reihe}, 12(251-257):22, 1963.

\bibitem{exoo2012dynamic}
G.~Exoo and R.~Jajcay.
\newblock Dynamic cage survey.
\newblock {\em The Electronic Journal of Combinatorics}, pages DS16--Jul, 2012.

\bibitem{exoo2011computational}
G.~Exoo, B.~D. McKay, W.~Myrvold, and J.~Nadon.
\newblock Computational determination of (3, 11) and (4, 7) cages.
\newblock {\em Journal of Discrete Algorithms}, 9(2):166--169, 2011.

\bibitem{eze2022recursive}
L.~C. Eze and R.~Jajcay.
\newblock Recursive constructions of graphs of large girth and given degree.
\newblock In {\em ITAT, CEUR Workshop Proceedings}, volume 3226, 2022.

\bibitem{EzeJJZ}
L.~C. Eze, R.~Jajcay, T.~Jajcayová, and D.~Závacká.
\newblock Theoretical and computational approaches to determining sets of orders for (k,g)-graphs, 2025.
\newblock \url{http://euler.doa.fmph.uniba.sk/graph-lists/lists-of-orders-kg-graphs.html}.

\bibitem{eze2023algorithmic}
L.~C. Eze, R.~Jajcay, and D.~Mihalova.
\newblock An algorithmic approach to determining spectra of orders of $(k,g)$-graphs.
\newblock In {\em ITAT, CEUR Workshop Proceedings}, volume 3498, 2023.

\bibitem{gross2001topological}
J.~L. Gross and T.~W. Tucker.
\newblock {\em Topological graph theory}.
\newblock Courier Corporation, 2001.

\bibitem{hoffman1960moore}
A.~J. Hoffman and R.~R. Singleton.
\newblock On moore graphs with diameters 2 and 3.
\newblock {\em IBM Journal of Research and Development}, 4(5):497--504, 1960.

\bibitem{jajcay2021spectra}
R.~Jajcay and T.~Raiman.
\newblock Spectra of orders for k-regular graphs of girth g.
\newblock {\em Discussiones Mathematicae Graph Theory}, 41(4):1115--1125, 2021.

\bibitem{jajcayova2016counting}
T.~B. Jajcayov{\'a}, S.~Filipovski, and R.~Jajcay.
\newblock Counting cycles in graphs with small excess.
\newblock {\em Lecture Notes of Seminario Interdisciplinare di Matematica}, 2016.

\bibitem{jacayova2016improved}
T.~B. Jajcayov{\'a}, S.~Filipovski, and R.~Jajcay.
\newblock Improved lower bounds for the orders of even girth cages.
\newblock {\em The Electronic Journal of Combinatorics}, 23, 09 2016.

\bibitem{jasenvcakova2020new}
K.~Jasen{\v{c}}{\'a}kov{\'a}, R.~Jajcay, and T.~Pisanski.
\newblock A new generalization of generalized petersen graphs.
\newblock {\em The Art of Discrete and Applied Mathematics}, 3(1):P1--04, 2020.

\bibitem{karlubik2018}
J.~Karlub{\'i}k.
\newblock Využitie softvérového balíka gap na vytváranie a testovanie hypotéz v algebraickej teórii grafov.
\newblock Master's thesis, Comenius University, Bratislava, 2018.

\bibitem{karteszi1960piani}
F.~K{\'a}rteszi.
\newblock Piani finiti ciclici come risoluzioni di un certo problema di minimo.
\newblock {\em Bollettino dell'Unione Matematica Italiana}, 15(4):522--528, 1960.

\bibitem{mavcaj2010search}
M.~Ma{\v{c}}aj and J.~{\v{S}}ir{\'a}{\v{n}}.
\newblock Search for properties of the missing moore graph.
\newblock {\em Linear Algebra and its Applications}, 432(9):2381--2398, 2010.

\bibitem{meringer1999fast}
M.~Meringer.
\newblock Fast generation of regular graphs and construction of cages.
\newblock {\em Journal of Graph Theory}, 30(2):137--146, 1999.

\bibitem{potocnikPentavelent}
P.~Poto{\v{c}}nik.
\newblock Datasets of highly symmetric objects, 1999.
\newblock \url{https://graphsym.net/}.

\bibitem{potovcnik2013cubic}
P.~Poto{\v{c}}nik, P.~Spiga, and G.~Verret.
\newblock Cubic vertex-transitive graphs on up to 1280 vertices.
\newblock {\em Journal of Symbolic Computation}, 50:465--477, 2013.

\bibitem{potocnik2015tetravalent}
P.~Poto{\v{c}}nik, P.~Spiga, and G.~Verret.
\newblock Bounding the order of the vertex-stabiliser in 3-valent vertex transitive and 4-valent arc-transitive graphs.
\newblock {\em Journal of Combinatorial Theory, Series B}, 111:148--180, 2015.

\bibitem{sauer2extremal}
N.~Sauer.
\newblock Extremaleigenschaften regul{\"a}rer graphen gegebener taillenweite {I and II}.
\newblock {\em Sitzungsber. {\"O}sterr. Akad. Wiss. Math.-Naturwiss. Kl., Abt}, 2(176):1--3, 27--43, 1967.

\bibitem{GRAPE}
L.~H. Soicher.
\newblock The grape package for gap, version 4.9. 2, 2024.
\newblock \url{https://gap-packages.github.io/grape}.

\end{thebibliography}
\end{document}